\theoremstyle{plain}
\newtheorem{theorem}{Theorem}[section]
\newtheorem{lemma}[theorem]{Lemma}
\newtheorem{corollary}[theorem]{Corollary}
\newtheorem{proposition}[theorem]{Proposition}
\theoremstyle{definition}
\numberwithin{equation}{section}
\newcommand{\Tr}{{\rm Tr_n}}
\newcommand{\Trm}{{\rm Tr_m}}
\long\def\symbolfootnote[#1]#2{\begingroup%
\def\thefootnote{\fnsymbol{footnote}}\footnote[#1]{#2}\endgroup}
\def\00{{\bf 0}}
\def\11{{\bf 1}}
\def\+{\oplus}
\newcommand{\boxtensor}{{\Box\kern-9.03pt\raise1.42pt\hbox{$\times$}}}
\newcommand{\F}{{\mathbb F}}
\newcommand{\N}{{\mathbb N}}
\newcommand{\V}{{\mathbb V}}
\newcommand{\be}{\begin{eqnarray}}
\newcommand{\ee}{\end{eqnarray}}
\def\meidl#1 {\fbox {\footnote {\ }}\ \footnotetext { From Wilfried: {\color{red}#1}}}
\def\hmeidl#1 {}
\begin{document}

\title{Determining the Walsh spectra of Taniguchi's and related APN-functions}

\author{Nurdag\"{u}l Anbar$^{1}$, Tekg\"{u}l Kalayc\i$^{1}$, Wilfried Meidl$^{2}$
\vspace{0.4cm} \\
\small $^1$Sabanc{\i} University,\\
\small MDBF, Orhanl\i, Tuzla, 34956 \. Istanbul, Turkey\\
\small Email: {\tt nurdagulanbar2@gmail.com}\\
\small Email: {\tt tekgulkalayci@sabanciuniv.edu}\\
\small $^2$Johann Radon Institute for Computational and Applied Mathematics,\\
\small Austrian Academy of Sciences, Altenbergerstrasse 69, 4040-Linz, Austria\\
\small Email: {\tt meidlwilfried@gmail.com}
 }

\date{}

\maketitle

\begin{abstract}
We introduce a method based on Bezout's theorem on intersection points of two projective plane curves, for determining the nonlinearity
of some classes of quadratic functions on $\F_{2^{2m}}$. Among those are the functions of Taniguchi 2019, Carlet 2011, and Zhou and Pott 2013, 
all of which are APN under certain conditions. This approach helps to understand why the majority of the functions in those classes have solely 
bent and semibent components, which in the case of APN functions is called the classical spectrum.
More precisely, we show that all Taniguchi functions have the classical spectrum independent from being APN. We determine the nonlinearity
of all functions belonging to Carlet's class and to the class of Zhou and Pott, which also confirms with comparatively simple proofs earlier results 
on the Walsh spectrum of APN-functions in these classes. Using the Hasse-Weil bound, we show that some simple sufficient conditions for the 
APN-ness of the Zhou-Pott functions, which are given in the original paper, are also necessary. 
\end{abstract}

\noindent \textbf{Keywords:} APN-function, Walsh spectrum, Bezout's theorem, projective plane curves, Taniguchi function, Carlet's APN-function,
Zhou-Pott APN-function, butterfly functions. \\  
\textbf {Mathematics Subject Classification(2010):}

\section{Introduction}

For a function $f$ from an $n$-dimensional vector space $\V_n$ over $\F_2$ to $\F_2$, the {\it Walsh transform} $\widehat{f}$ is 
the integer valued function
\[ \widehat{f}(u) = \sum_{x\in\V_n}(-1)^{f(x)+\langle x,u\rangle}, \]
where $\langle\,,\,\rangle$ denotes any (nondegenerate) inner product in $\V_n$. The Walsh spectrum 
$\mathcal{W}_f := \{\widehat{f}(u)\,:\,u\in\V_n\}$ is independent from the inner product used in the Walsh transform.
The Boolean function $f$ is called {\it bent} if for all $u\in\V_n$ we have $|\widehat{f}(u)| = 2^{n/2}$, {\it semibent} if 
$\mathcal{W}_f = \{0,\pm 2^{(n+1)/2}\}$ or $\mathcal{W}_f = \{0,\pm 2^{(n+2)/2}\}$, and more general, {\it $s$-plateaued} if
$\mathcal{W}_f = \{0,\pm 2^{(n+s)/2}\}$ for some integer $s$. Clearly $n+s$ is always even. In particular, bent functions 
only exist if $n$ is even.

Let $F$ be a {\it vectorial function} from $\V_n$ to $\V_n$. Then for a nonzero $c\in \V_n$ the Boolean function $F_c(x) = \langle c,F(x) \rangle$ 
is called a {\it component function} of $F$. The {\it (extended) Walsh spectrum} of $F:\V_n\rightarrow\V_n$ is 
then $\mathcal{W}_F = \cup_{c\in\V_n^*}\mathcal{W}_{F_c}$, the union of the Walsh spectra of the component functions.
The nonlinearity $NL(F)$ of $F$, which plays an important role in applications in cryptography (see \cite{carlet,n1}), is then 
\[ NL(F)  = 2^{n-1}-\frac{1}{2}\max_{W\in\mathcal{W}_F}|W|. \]
A function $F:\V_n\rightarrow\V_n$ is called differentially $k$-uniform if  for all nonzero $a \in \V_n$ and $b\in \V_n$, the equation
\[ D_aF(x) = F(x+a) - F(x) = b \]
has at most $k$ solutions. Having applications in cryptography, differentially $2$-uniform functions $F$, called {\it almost perfect nonlinear (APN)} 
functions are of particular interest, see \cite{BloNy,n2}. In some applications it is required that $F$ is a permutation. Since it seems to be hard to find 
APN-permutations of $\V_n$ when $n$ is even, one often considers also differentially $4$-uniform functions, see e.g. \cite{cdp}.
   
Most known examples and infinite classes of APN-functions $F$ on $\V_n$ are quadratic, i.e., all their component functions have algebraic degree 
(at most) $2$, and hence all component functions are plateaued, see \cite{carlet0}. As is well known, if $n$ is odd, all components of a quadratic
(or plateaued) APN-function must be semibent, i.e., $\mathcal{W}_F\in \{0, \pm 2^{(n+1)/2}\}$. Such functions are called {\it almost bent} functions,
by the Sidelnikov-Chabeaud-Vaudenay bound, \cite{cv}, they are the functions on $\V_n$ with highest nonlinearity.

The situation is different for quadratic APN-functions $F:\V_n\rightarrow\V_n$ when $n$ is even. It is known that $F$ must have at 
least $2(2^n-1)/3$ bent component functions with equality if and only if all other component functions are semibent, i.e.,
$\mathcal{W}_F = \{0,\pm 2^{n/2}, \pm 2^{(n+2)/2}\}$.  As all investigated infinite classes of quadratic APN-functions in even dimension 
have this Walsh spectrum, it is often called the {\it classical spectrum}, see \cite{p} for more details. 

In \cite{ye}, Edel showed that the 13 non-equivalent APN-functions in dimension $6$, which Dillon presented in \cite{Dbanf} (see \cite[p.162]{p})
represent all inequivalent classes of APN-functions in dimension $6$, and he pointed out that among those, there is exactly one class which 
does not have the classical spectrum. (Note that in dimension 6, a quadratic function anyway can only be bent, semibent or $4$-plateaued).
As indicated by Schmidt in \cite{kai-uwe}, there are at least three different spectra for APN-functions in dimension $8$. 
This suggests that there is a larger variety of spectra for APN-functions if $n$ is not small. In fact it is not even known if the APN-property implies 
a high nonlinearity. So far, solely the worst case, the case that one component function is affine, hence $n$-plateaued can be excluded, see the discussion in \cite{C18}. 
 
In this article we introduce a method based on Bezout's theorem on intersection points of two projective plane curves, which is very well suitable to 
show a high nonlinearity for some classes of quadratic functions on $\V_n$ (represented in bivariate form). Among those are some known classes of
APN-functions, and the butterfly construction, which besides from one famous exception (\cite{Dillon}), yields differentially $4$-uniform functions.

The paper is organized as follows. In Section \ref{bezout} we explain the method based on Bezout's theorem. In Section \ref{Tguchi} we show that the recently introduced APN-function in Taniguchi \cite{tani} 
has the classical spectrum, and apply our approach also to the known APN-functions with classical spectrum in Carlet \cite{C11}, and Zhou and Pott \cite{zp}.
For the Zhou-Pott function we will also render the APN condition more precisely using the Hasse-Weil bound. Finally we point out that
our method also works well for the butterfly functions in \cite{cdp}.  




\section{Bezout's theorem and the nonlinearity of quadratic functions}
\label{bezout}

\subsection{Bezout's theorem and common zero sets}

We first recall some basic facts related to plane curves over finite fields. For details, we refer to \cite{HKT}. Let $\mathbb{F}$ 
be a field and $\bar{\mathbb{F}}$ be the algebraic closure of $\mathbb{F}$. An affine curve $\mathcal{X}$ is the zero set of a polynomial 
$f(X,Y)\in \bar{\mathbb{F}}[X,Y] $, i.e.,  
\begin{align*}
\mathcal{X}=\lbrace P=(x,y)\in \bar{\mathbb{F}}\times \bar{\mathbb{F}}\; | \; f(x,y)=0\rbrace \ .
\end{align*}
We say that $f(X,Y)$ is a defining polynomial of $\mathcal{X}$ and the degree of $\mathcal{X}$ is the degree of $f(X,Y)$. A component of 
$\mathcal{X}$ is a curve $\mathcal{Y}$ such that the defining polynomial $g(X,Y)$ of $\mathcal{Y}$ divides $f(X,Y)$. 

\noindent Let  $\mathcal{X}$ be a curve with defining equation $f(X,Y)$ and $\ell$ be a line given by $bX-aY+c$, which is not 
a component of $\mathcal{X}$, and suppose that  $P=(x_0,y_0)\in \mathcal{X}\cap \ell$, i.e., $P$ is an intersection point of $\mathcal{X}$ 
and $\ell$. We can parametrize $\ell$ as follows:
\begin{align*}
x=x_0+at \quad y=y_0+bt \;\; \text{for } t\in \bar{\mathbb{F}} \ .
\end{align*}
As $\ell$ is not a factor of $f(X,Y)$, we have
\begin{align*}
f(X,Y)=f(x_0+at, y_0+bt)= h_mt^m+\cdots +h_d t^d\in \bar{\mathbb{F}}[t] \quad \text{with} \; \; h_m \neq 0 \ .
\end{align*}
Then $m:=m(P, \mathcal{X}\cap \ell)$ is called the intersection multiplicity of $\mathcal{X}$ and $\ell$ at $P$. For $P\in \mathcal{X}$, 
\begin{align*}
m_P(\mathcal{X}):= \min_\ell \lbrace  m(P, \mathcal{X}\cap \ell) \rbrace 
\end{align*}
is called the multiplicity of $\mathcal{X}$ at $P$, where the multiplicity is determined over all lines $\ell$ through $P$ which are not a factor of $f(X,Y)$.
If $m_P(\mathcal{X})=1$, then $P$ is called a non-singular point, otherwise it is called singular. It is a well-known fact that $P=(x_0,y_0)$ is a singular point of $\mathcal{X}$ if and only if 
\begin{align*}
\frac{\partial f(X,Y)}{ \partial X}(x_0,y_0)=\frac{\partial f(X,Y)}{\partial Y}(x_0,y_0)=0 \ ,
\end{align*}
where $\partial f/ \partial X$ and $\partial f/ \partial Y$ are the partial derivatives of $f(X,Y)$ with respect to $X$ and $Y$, respectively.

\noindent Let $\mathcal{X}$ and $\mathcal{Y}$ be two plane curves such that $P\in \mathcal{X}\cap \mathcal{Y}$. Then $\mathcal{X}$ and $\mathcal{Y}$ intersect at $P$ with multiplicity 
\begin{align*}
m(P, \mathcal{X}\cap \mathcal{Y}) \geq m_P(\mathcal{X}) m_P(\mathcal{Y}) \ ,
\end{align*}
and equality holds if and only if they do not have a common tangent line at $P$, see \cite[Theorem 3.7]{HKT}. We remark that all concepts above can similarly be defined for the points of the curves at infinity. Then we have the following well-known result known as Bezout's theorem, see \cite[Theorem 3.13]{HKT}.
\begin{theorem}
Let $\mathcal{X}$ and $\mathcal{Y}$ be two projective plane curves of degree $d_1$ and $d_2$, respectively. If $\mathcal{X}$ and $\mathcal{Y}$ do not have a common component, then
\begin{align*}
\sum_{P\in \mathcal{X}\cap \mathcal{Y}}m(P, \mathcal{X}\cap \mathcal{Y})=d_1d_2 \ .
\end{align*} 
\end{theorem}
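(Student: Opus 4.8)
The plan is to prove Bezout's theorem by the classical \emph{resultant method}, which keeps the entire argument inside the polynomial algebra over $\bar{\mathbb{F}}$ and so stays close to the elementary framework set up above. Let $F(X,Y,Z)$ and $G(X,Y,Z)$ be homogeneous defining polynomials of $\mathcal{X}$ and $\mathcal{Y}$ of degrees $d_1$ and $d_2$; the hypothesis that the two curves share no component means precisely that $F$ and $G$ have no common non-constant factor. Before summing anything I would first record that $\mathcal{X}\cap\mathcal{Y}$ is then finite, so that the left-hand side is well defined; this is exactly the statement that two plane curves without a common component meet in finitely many points, which can itself be read off from the non-vanishing of a resultant.

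The core construction proceeds as follows. Using that $\mathcal{X}\cap\mathcal{Y}$ is finite, I would apply a projective change of coordinates putting the configuration in \emph{general position} relative to the point $O=[0:0:1]$: arrange that $O$ lies on neither curve, that no intersection point lies on the line $Z=0$, and that the lines joining $O$ to the (finitely many) intersection points are pairwise distinct, so that projection from $O$ separates the points of $\mathcal{X}\cap\mathcal{Y}$. Viewing $F$ and $G$ as polynomials in $Z$ over $\bar{\mathbb{F}}[X,Y]$ --- which by the first condition have degrees exactly $d_1$ and $d_2$ with constant leading coefficients --- I would form the resultant
\[ R(X,Y)=\mathrm{Res}_Z\bigl(F,G\bigr). \]
Two standard facts then drive the proof: $R$ is a homogeneous form of degree exactly $d_1 d_2$ in $X,Y$, and $R\not\equiv 0$ because $F$ and $G$ have no common factor; moreover $[x_0:y_0]$ is a zero of $R$ if and only if the line through $O$ and $[x_0:y_0:0]$ actually meets $\mathcal{X}\cap\mathcal{Y}$. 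Since a nonzero homogeneous binary form over the algebraically closed field $\bar{\mathbb{F}}$ splits into $d_1 d_2$ linear factors counted with multiplicity, I obtain
\[ \sum_{[x_0:y_0]}\mathrm{ord}_{[x_0:y_0]}(R)=d_1 d_2. \]

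It then remains to match, for each zero $[x_0:y_0]$ of $R$, its order as a root of $R$ with the intersection data lying over it. General position guarantees that the corresponding line carries a single intersection point $P$, so the decisive local claim is the equality
\[ \mathrm{ord}_{[x_0:y_0]}(R)=m(P,\mathcal{X}\cap\mathcal{Y}), \]
and summing it over all zeros of $R$ and comparing with the splitting count above finishes the proof. I expect this last step to be the main obstacle: it requires relating the order of vanishing of a one-variable resultant to the intersection multiplicity $m(P,\mathcal{X}\cap\mathcal{Y})$, which is most transparently encoded as the length $\dim_{\bar{\mathbb{F}}}\mathcal{O}_{P}/(F_*,G_*)$ of the local ring at $P$ modulo the local equations of the two curves. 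Proving this identity --- by localizing at $P$, dehomogenizing, and comparing the resultant directly with the structure of the local intersection algebra --- is the technical heart, whereas the global bookkeeping (homogeneity, the splitting into linear factors, general position) is comparatively routine. As a fallback I would keep in reserve the intersection-theoretic argument using $\mathrm{Pic}(\mathbb{P}^2)\cong\mathbb{Z}$, in which both sides appear as the degree of a product of divisor classes; but the resultant route stays nearer to the concrete, curve-by-curve style of the surrounding section.
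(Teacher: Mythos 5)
First, a point of comparison: the paper does not prove this statement at all --- it is quoted as a known result with a citation to \cite[Theorem 3.13]{HKT} --- so your attempt cannot be measured against an internal argument and must stand on its own. As a plan, your outline is the classical resultant proof, and its global bookkeeping is essentially right: after a projective change of coordinates one may assume $[0:0:1]$ lies on neither curve, so that $F$ and $G$ have $Z$-degrees exactly $d_1$ and $d_2$ with nonzero constant leading coefficients; then $R=\mathrm{Res}_Z(F,G)$ is homogeneous of degree $d_1d_2$, is not identically zero precisely because $F$ and $G$ share no component, and splits into $d_1d_2$ linear factors over $\bar{\mathbb{F}}$; general position lets each zero of $R$ see exactly one intersection point.

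The genuine gap is the step you yourself defer: the local identity $\mathrm{ord}_{[x_0:y_0]}(R)=m(P,\mathcal{X}\cap\mathcal{Y})$ is never proved, and it is not a technicality --- it \emph{is} the theorem. Everything preceding it is formal manipulation of resultants; the content of Bezout's theorem is exactly that this algebraic count coincides, point by point, with the geometrically defined intersection multiplicity. Worse, there is a definitional mismatch hidden in your sketch. The multiplicity $m(P,\mathcal{X}\cap\mathcal{Y})$ used in the paper is the one of \cite{HKT}, of which the paper records only its properties (e.g.\ $m(P,\mathcal{X}\cap\mathcal{Y})\ge m_P(\mathcal{X})m_P(\mathcal{Y})$, with equality exactly when there is no common tangent), whereas your proposed route to the local identity works with the local length $\dim_{\bar{\mathbb{F}}}\mathcal{O}_P/(F_*,G_*)$. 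Those two notions do agree, but the agreement is itself a nontrivial theorem (in effect, the uniqueness of intersection numbers satisfying the standard axioms), so invoking it without proof merely relocates the gap. To close the argument you would have to either (i) carry out the localization comparison you gesture at \emph{and} verify that the resulting local length is the multiplicity the paper actually uses, or (ii) take the order of vanishing of the resultant as the definition of $m(P,\mathcal{X}\cap\mathcal{Y})$, in which case you owe a proof that this order is independent of the choice of coordinates --- a step your outline also omits. As written, the proposal reduces Bezout's theorem to a lemma of the same order of difficulty as the theorem itself.
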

 \noindent By Bezout's theorem we conclude that $\mathcal{X}$ and $\mathcal{Y}$ intersect in at most $d_1d_2$ distinct points.

\noindent Field extensions $E_1$, $E_2$ of $F$ are called linearly disjoint extensions of $F$ if $E_1\cap E_2=F$. We first recall a well-known 
fact from Galois theory, for which the proof can be also found in \cite[Lemma 3.1]{bbmm}.
\begin{lemma}\label{lem:disjoint}
Let $E_1$, $E_2$ be two linearly disjoint field extensions of $F$. Then any $F$-linearly independent subset 
$\lbrace v_1, \ldots, v_k \rbrace $ of $E_1$ is also linearly independent over $E_2$.
\end{lemma}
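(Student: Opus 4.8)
The plan is to argue by contradiction, extracting from a hypothetical $E_2$-linear dependence among $v_1,\dots,v_k$ a strictly shorter one, by applying a field automorphism that fixes all the $v_i$ but genuinely moves one of the coefficients. First I would pass to finitely generated, hence finite, subextensions. Suppose $\sum_{i=1}^k c_i v_i = 0$ is a nontrivial relation with $c_i\in E_2$, chosen of minimal length $k$ among all such relations (so, after discarding vanishing terms, every $c_i\neq 0$); if $k=1$ this already forces $v_1=0$, contradicting independence, so $k\geq 2$. Set $K=F(v_1,\dots,v_k)\subseteq E_1$ and $L=F(c_1,\dots,c_k)\subseteq E_2$. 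These are finitely generated, and $K\cap L\subseteq E_1\cap E_2=F$, so $K\cap L=F$.

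The key structural input is that, in the setting relevant here (all the fields in this paper are finite fields, so every finite extension is Galois), $L/F$ is a finite Galois extension. Then the compositum $KL$ is Galois over $K$, and restriction to $L$ gives the standard isomorphism $\Gal(KL/K)\cong \Gal(L/K\cap L)=\Gal(L/F)$. In particular every $\tau\in\Gal(L/F)$ is the restriction of some $\sigma\in\Gal(KL/K)$; such a $\sigma$ fixes each $v_i$ (as $v_i\in K$) and satisfies $\sigma(c_i)=\tau(c_i)\in L\subseteq E_2$ for every $i$.

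Now I would carry out the descent. After scaling we may assume $c_k=1$. Since $\{v_1,\dots,v_k\}$ is linearly independent over $F$, the $c_i$ cannot all lie in $F$; as $c_k=1\in F$, there is an index $j<k$ with $c_j\notin F$. Because $L/F$ is Galois, the fixed field of $\Gal(L/F)$ is exactly $F$, so we may pick $\tau\in\Gal(L/F)$ with $\tau(c_j)\neq c_j$ and extend it to $\sigma\in\Gal(KL/K)$ as above. Applying $\sigma$ to the relation and using $\sigma(v_i)=v_i$ gives $\sum_i \sigma(c_i)v_i=0$; subtracting the original relation yields $\sum_i(c_i-\sigma(c_i))v_i=0$, whose coefficients again lie in $E_2$. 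Its $v_k$-coefficient is $1-1=0$, while its $v_j$-coefficient is $c_j-\tau(c_j)\neq 0$, so it is a nontrivial $E_2$-linear dependence of length strictly less than $k$, contradicting minimality.

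The step that needs care — and where the hypothesis $E_1\cap E_2=F$ together with normality of $L/F$ is really used — is the requirement that $\sigma$ send the coefficients back into $E_2$, so that the shortened relation is still a relation over $E_2$ and not merely over the larger field $KL$. Without normality this collapses (the statement itself is false for general non-Galois extensions, e.g.\ two distinct cubic fields meeting only in $\Q$), but in the finite-field context of interest every extension is Galois, so the argument closes. Equivalently, one can phrase the same content as a degree count: the isomorphism $\Gal(KL/K)\cong\Gal(L/F)$ yields $[KL:K]=[L:F]$, hence $K$ and $L$ are linearly disjoint over $F$, so the $F$-independent set $\{v_1,\dots,v_k\}$ remains linearly independent over $L$ — contradicting the assumed relation $\sum_i c_i v_i=0$ with all $c_i\in L$.
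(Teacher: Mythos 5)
Your proof is correct, but there is no in-paper argument to compare it against: the authors state this lemma as a recalled fact and defer its proof entirely to Lemma 3.1 of the cited Bracken--Byrne--Markin--McGuire paper. Your argument --- take a nontrivial $E_2$-relation with the fewest nonzero coefficients, normalize one coefficient to $1$, apply an automorphism that fixes each $v_i$ but moves some coefficient while keeping all coefficients inside $E_2$, and subtract to get a shorter relation --- is precisely the standard proof of this fact; in the only setting where the paper invokes the lemma ($F=\F_2$, $E_1=\F_{2^m}$, $E_2=\F_{2^k}$ with $\gcd(m,k)=1$, in the proof of Proposition \ref{pro:zero}), your abstract step of extending $\tau\in\Gal(L/F)$ to $\sigma\in\Gal(KL/K)$ can be realized concretely by the Frobenius $x\mapsto x^{2^m}$, which fixes $E_1$ pointwise, stabilizes $E_2$, and fixes $c\in E_2$ only when $c\in\F_{2^{\gcd(m,k)}}=\F_2$. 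Two features of your write-up add genuine value beyond the citation. First, your warning is well taken: with the paper's definition of ``linearly disjoint'' as $E_1\cap E_2=F$, the lemma is false for general field extensions (two distinct conjugate cubic extensions of $\Q$ meet only in $\Q$, yet a $\Q$-basis of one becomes dependent over the other, since their compositum has degree $6<9$), so the Galois/normality input your proof isolates --- automatic for finite fields --- is genuinely needed for the statement to hold as used. Second, your closing degree-count reformulation ($[KL:K]=[L:F]$ forces the multiplication map $K\otimes_F L\to KL$ to be injective, i.e.\ linear disjointness in the strong sense) is also correct and is arguably the cleanest packaging of the same idea.
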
 
\noindent Lemma \ref{lem:disjoint} is the main tool to show the following result.
\begin{lemma}\label{lem:zero}
Let $k$ be an integer with $\gcd (k,m)=1$ and let $f$ be a linearized polynomial of the form
\[ f(X) = C_0X + C_1X^{2^k} + C_2X^{2^{2k}} + \cdots + C_dX^{2^{dk}} \in \mathbb{F}_{2^m}[X] \]
of degree $2^{dk}$. Then $f(X)$ has at most $2^d$ zeros in $\mathbb{F}_{2^m}$.
\end{lemma}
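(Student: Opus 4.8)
The plan is to exploit the fact that $f$ is not merely $\mathbb{F}_2$-linear but $\mathbb{F}_{2^k}$-linear, and then to transfer an $\mathbb{F}_{2^k}$-dimension bound for its kernel down to an $\mathbb{F}_2$-dimension bound over $\mathbb{F}_{2^m}$ using linear disjointness.

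First I would set $q = 2^k$ and observe that $f(X) = \sum_{i=0}^d C_i X^{q^i}$ is a $q$-polynomial: it is additive, and for $\lambda \in \mathbb{F}_q$ one has $\lambda^{q^i} = \lambda$, so $f(\lambda x) = \lambda f(x)$. Consequently the full zero set $V = \{x \in \overline{\mathbb{F}_2} : f(x) = 0\}$ is an $\mathbb{F}_q$-vector space. I claim $\dim_{\mathbb{F}_q} V \le d$. If $C_0 \neq 0$ then $f$ is separable, since its formal derivative is the constant $C_0$, so it has exactly $q^d$ distinct roots, forming an $\mathbb{F}_q$-space of dimension exactly $d$. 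If instead the lowest nonvanishing coefficient is $C_j$ with $j > 0$, I would write $f = h^{q^j}$, where $h(X) = \sum_{i \ge j} C_i^{1/q^j} X^{q^{i-j}}$ is a $q$-polynomial of $q$-degree $d-j$ with nonzero linear coefficient $C_j^{1/q^j}$; since $f$ and $h$ have the same root set, $\dim_{\mathbb{F}_q} V = d - j \le d$.

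Next I would bring in the hypothesis $\gcd(k,m) = 1$. It gives $\mathbb{F}_{2^m} \cap \mathbb{F}_{2^k} = \mathbb{F}_{2^{\gcd(m,k)}} = \mathbb{F}_2$, so $\mathbb{F}_{2^m}$ and $\mathbb{F}_{2^k}$ are linearly disjoint extensions of $\mathbb{F}_2$. Now let $\{v_1,\dots,v_r\}$ be any $\mathbb{F}_2$-linearly independent subset of $V \cap \mathbb{F}_{2^m}$, the set of zeros of $f$ lying in $\mathbb{F}_{2^m}$. These vectors are $\mathbb{F}_2$-independent elements of $\mathbb{F}_{2^m}$, so by Lemma~\ref{lem:disjoint} (applied with $E_1 = \mathbb{F}_{2^m}$ and $E_2 = \mathbb{F}_{2^k}$) they are also linearly independent over $\mathbb{F}_{2^k}$. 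But they all lie in $V$, which has $\mathbb{F}_{2^k}$-dimension at most $d$; hence $r \le d$.

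Therefore the $\mathbb{F}_2$-vector space $V \cap \mathbb{F}_{2^m}$ has dimension at most $d$, which yields at most $2^d$ zeros of $f$ in $\mathbb{F}_{2^m}$, as claimed. The conceptual crux, and the only place the coprimality condition enters, is the passage from $\mathbb{F}_2$-independence to $\mathbb{F}_{2^k}$-independence via linear disjointness; the routine work lies in the separability and dimension bookkeeping for $V$, in particular the $C_0 = 0$ case handled by extracting the $q^j$-th power.
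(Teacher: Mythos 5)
Your proof is correct. Note that the paper does not actually prove this lemma itself --- it cites Trachtenberg \cite{tr} and instead proves the bivariate generalization, Proposition \ref{pro:zero} --- but your argument rests on exactly the two pillars used there: (a) the zero set of $f$ is an $\mathbb{F}_{2^k}$-vector space because $f$ is a $2^k$-polynomial, and (b) Lemma \ref{lem:disjoint} transfers $\mathbb{F}_2$-independence of elements of $\mathbb{F}_{2^m}$ to $\mathbb{F}_{2^k}$-independence. The only organizational difference is the direction of the transfer: you bound the $\mathbb{F}_{2^k}$-dimension of the full kernel $V$ in the algebraic closure and then intersect down to $\mathbb{F}_{2^m}$, whereas the paper starts from an $\mathbb{F}_2$-basis of the zeros lying in $\mathbb{F}_{2^m}$, spans up over $\mathbb{F}_{2^k}$, observes that the span still consists of zeros, and bounds its size by the degree (by Bezout's theorem in the bivariate setting). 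One remark on economy: your separability analysis (the case split on $C_0$, with the extraction $f = h^{q^j}$) establishes the exact $\mathbb{F}_q$-dimension of $V$, but it is not needed for the stated bound; since $f \neq 0$ has degree $q^d$, the crude count $|V| \le q^d$ already forces $\dim_{\mathbb{F}_q} V \le d$, which is the only fact the paper's version of the argument uses.
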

\noindent The proof of Lemma \ref{lem:zero} can be found in \cite{tr}. We omit it here as we generalize the result as follows.

\noindent Let $f_1(X,Y), f_2(X,Y)\in \mathbb{F}_{2^m}[X,Y]$ be two linearized polynomials. Then the common zero set 
$\mathcal{Z}_{\mathbb{F}_{2^m}}(f_1,f_2)$ of $f_1, f_2$, which is defined by 
\begin{align*}
\mathcal{Z}_{\mathbb{F}_{2^m}}(f_1,f_2)=\lbrace (x,y) \in \mathbb{F}_{2^m} \times \mathbb{F}_{2^m} \, | \; f_1(x,y)=f_2(x,y)=0\rbrace \ ,
\end{align*}
forms an $\mathbb{F}_2$-vector space. By using the vector space structure and the method in \cite{tr} we have the following proposition.
\begin{proposition}\label{pro:zero}
Let $k$ be an integer with $\gcd (k,m)=1$ and let $f_1(X,Y)$, $f_2(X,Y)$ be linearized polynomials of the form
\begin{align}
\label{CDXY}
C_0X + D_0Y + C_1X^{2^k} + D_1Y^{2^k} + \cdots + C_dX^{2^{dk}} + D_dY^{2^{dk}} \in \mathbb{F}_{2^m}[X,Y]
 \end{align}
of degree $2^{d_1k}$ and $2^{d_2k}$, respectively. If $f_1$ and $f_2$ do not have a common factor, then 
\begin{align}\label{eq:cardinality}
 |\mathcal{Z}_{\mathbb{F}_{2^m}}(f_1,f_2)|\leq 2^{d_1+d_2} \ .
\end{align}
\end{proposition}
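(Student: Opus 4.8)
The plan is to show $\dim_{\mathbb{F}_2}\mathcal{Z}_{\mathbb{F}_{2^m}}(f_1,f_2)\le d_1+d_2$, which is equivalent to \eqref{eq:cardinality} since $\mathcal{Z}_{\mathbb{F}_{2^m}}(f_1,f_2)$ is an $\mathbb{F}_2$-vector space. Write $q=2^k$ and split each $f_j$ as $f_j(x,y)=\phi_j(x)+\psi_j(y)$, where $\phi_j(X)=\sum_i C_i^{(j)}X^{q^i}$ and $\psi_j(Y)=\sum_i D_i^{(j)}Y^{q^i}$ are $q$-linearized polynomials; for a nonzero such polynomial $\ell$ let $\deg_q\ell$ be the largest index of a nonzero coefficient, so that the degree hypothesis reads $\max(\deg_q\phi_j,\deg_q\psi_j)=d_j$. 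Under composition the $q$-linearized polynomials over $\mathbb{F}_{2^m}$ form a non-commutative (left and right) Euclidean domain, so greatest common right divisors and least common left multiples exist and satisfy the usual degree relation. Letting $\pi_X$ denote projection to the first coordinate, I will bound $\dim\mathcal{Z}$ by rank–nullity in the form $\dim\mathcal{Z}=\dim\pi_X(\mathcal{Z})+\dim\bigl(\mathcal{Z}\cap(\{0\}\times\mathbb{F}_{2^m})\bigr)$, and treat the two summands separately.

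For the fiber over $0$, a pair $(0,y)$ lies in $\mathcal{Z}$ exactly when $\psi_1(y)=\psi_2(y)=0$. Writing $g$ for the greatest common right divisor of $\psi_1$ and $\psi_2$ and using a Bézout relation $g=a\psi_1+b\psi_2$ with $q$-linearized $a,b$, one checks that the common kernel of $\psi_1$ and $\psi_2$ is exactly $\ker g$. Hence, applying Lemma \ref{lem:zero} to $g$, this fiber has $\mathbb{F}_2$-dimension at most $r:=\deg_q g$.

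For the image $\pi_X(\mathcal{Z})$ I eliminate $y$. Let $M=A\psi_1=B\psi_2$ be the least common left multiple of $\psi_1,\psi_2$, so that $\deg_q A=\deg_q\psi_2-r$ and $\deg_q B=\deg_q\psi_1-r$. If $(x,y)\in\mathcal{Z}$, then $\psi_1(y)=\phi_1(x)$ and $\psi_2(y)=\phi_2(x)$; applying $A$ to the first and $B$ to the second gives $M(y)=(A\phi_1)(x)$ and $M(y)=(B\phi_2)(x)$, whence $(A\phi_1+B\phi_2)(x)=0$. Thus every $x\in\pi_X(\mathcal{Z})$ is a zero of the single-variable $q$-linearized polynomial $h:=A\phi_1+B\phi_2$, whose $q$-degree is at most $\max(\deg_q\phi_1+\deg_q A,\deg_q\phi_2+\deg_q B)\le d_1+d_2-r$. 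Provided $h\neq 0$, Lemma \ref{lem:zero} yields $\dim\pi_X(\mathcal{Z})\le d_1+d_2-r$, and adding the fiber bound gives $\dim\mathcal{Z}\le d_1+d_2$, as required.

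The crux — and the only place where the hypothesis that $f_1,f_2$ share no factor enters — is to exclude $h=0$; I expect this to be the main obstacle. If $h=0$, then $A\phi_1=B\phi_2$ together with $A\psi_1=B\psi_2$ gives $A(f_1)=B(f_2)=:P$ in $\mathbb{F}_{2^m}[X,Y]$, and $P\neq 0$ since $A\neq 0$ and $f_1\neq 0$ force a nonzero top-degree term $a\,f_1^{q^{\deg_q A}}$. As $A(f_1)=\sum_i a_if_1^{q^i}$ is divisible by $f_1$ and likewise $B(f_2)$ by $f_2$, we get $f_1\mid P$ and $f_2\mid P$. Over $\bar{\mathbb{F}}$ the set $V(P)$ is the disjoint union of the level curves $\{f_2=c'\}$ as $c'$ ranges over the finitely many roots of $B$; the irreducible component $Z_0$ of $\{f_1=0\}$ through the origin is connected and contained in $V(P)$, hence lies in a single such level curve $\{f_2=c'_0\}$, and since $(0,0)\in Z_0$ with $f_2(0,0)=0$ we must have $c'_0=0$. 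Then the defining polynomial of $Z_0$ divides both $f_1$ and $f_2$, contradicting the no-common-factor assumption. The remaining ingredients — the degree formula for the least common left multiple, the identification of the common kernel with $\ker g$, and the degenerate cases $\psi_1=0$ or $\psi_2=0$ (which are either handled directly by the same projection argument or excluded, since two $q$-linearized polynomials in a single variable always share the factor $X$, respectively $Y$) — are routine once Lemma \ref{lem:zero} is available.
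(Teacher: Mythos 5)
Your proof is correct, but it follows a genuinely different route from the paper's. The paper's own proof is much shorter and works by scalar extension: it takes an $\mathbb{F}_2$-basis of $\mathcal{Z}_{\mathbb{F}_{2^m}}(f_1,f_2)$, uses the linear disjointness of $\mathbb{F}_{2^m}$ and $\mathbb{F}_{2^k}$ (Lemma \ref{lem:disjoint}) together with the $\mathbb{F}_{2^k}$-homogeneity $f_i(cx,cy)=cf_i(x,y)$ to produce an $\mathbb{F}_{2^k}$-vector space of common zeros inside $\mathbb{F}_{2^{km}}$ of the same dimension, and then applies Bezout's theorem once to the curves $f_1=0$ and $f_2=0$ to get $2^{k\dim_{\mathbb{F}_2}\mathcal{Z}}\le \deg(f_1)\deg(f_2)=2^{(d_1+d_2)k}$. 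You instead stay over $\mathbb{F}_{2^m}$, split $f_j=\phi_j(X)+\psi_j(Y)$, and exploit the twisted (Ore) polynomial ring structure of $q$-linearized polynomials to eliminate $Y$ via the least common left multiple, reducing everything to the univariate Lemma \ref{lem:zero} through rank--nullity; the no-common-factor hypothesis enters not through Bezout but through your level-curve argument that the eliminant $h=A\phi_1+B\phi_2$ cannot vanish identically --- which is indeed the crux, and which I checked: the identity $A(f_1)=B(f_2)=P$, the divisibility $f_1\mid P$, $f_2\mid P$, and the conclusion that the component of $\{f_1=0\}$ through the origin lies in $\{f_2=0\}$ all hold, as do your GCRD/LCLM degree bookkeeping and the degenerate cases $\psi_j=0$. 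As for what each approach buys: the paper's argument gives brevity and, more importantly, the framework the rest of the paper runs on, since the same scalar-extension-plus-Bezout picture immediately yields Corollary \ref{cor:zero} and the multiplicity refinement of Corollary \ref{BezNL} (a common point of intersection multiplicity $>1$ forces $s<d_1+d_2$), which are exactly the tools used in Theorem \ref{TThm} and later sections, whereas your elimination gives no access to intersection multiplicities. Your argument buys independence from Bezout's theorem --- the projective-curve machinery is replaced by Lemma \ref{lem:zero} (whose proof in the cited reference is itself the linear-disjointness trick) plus standard Ore-ring facts (Euclidean structure, the degree formula for the least common left multiple), which you quote rather than prove --- and it yields slightly finer information, namely the split $\dim_{\mathbb{F}_2}\mathcal{Z}\le(d_1+d_2-r)+r$ with $r$ the degree of the greatest common right divisor of $\psi_1,\psi_2$, bounding the projection to the first coordinate and the fiber over $0$ separately.
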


\begin{proof}
Note that the assumption $\gcd (k,m)=1$ implies that $\mathbb{F}_{2^m}$ and $\mathbb{F}_{2^k}$ are linearly disjoint over $\mathbb{F}_2$. Let $S=\lbrace v_1, \ldots , v_\ell \rbrace$ be a basis for 
$V_1=\mathcal{Z}_{\mathbb{F}_{2^m}}(f_1,f_2)$ over $\mathbb{F}_2$. We consider the $\mathbb{F}_{2^k}$-vector space $V_2$ generated by $S$. Since $\mathbb{F}_{2^m}$ and 
$\mathbb{F}_{2^k}$ are linearly disjoint over $\mathbb{F}_2$, the set $S$ is linearly independent also over $\mathbb{F}_{2^k}$ by Lemma \ref{lem:disjoint}. Consequently, 
\begin{align}\label{eq:dimension}
\mathrm{dim}_{\mathbb{F}_{2^k}}(V_2)=\mathrm{dim}_{\mathbb{F}_{2}}(V_1) \ .
\end{align}
We observe that for any $c\in \mathbb{F}_{2^k}$,
\begin{align*}
f_1(cx,cy)=cf_1(x,y) \quad \text{and} \quad f_2(cx,cy)=cf_2(x,y) 
\end{align*}
as $f_1, f_2$ are of the form $(\ref{CDXY})$, and hence any element of $V_2$ is a common zero of $f_1$ and $f_2$. Note that $V_2$ is a subset of $\mathbb{F}_{2^{km}}$. As $f_1,f_2$ do not have any common factor,
 by Bezout's theorem, we have
\begin{align}\label{eq:car}
|V_2|\leq \mathrm{deg}(f_1)\; \mathrm{deg}(f_2)= 2^{(d_1+d_2)k}  \ .
\end{align}
By Equations \eqref{eq:dimension} and \eqref{eq:car}, we then have $\mathrm{dim}_{\mathbb{F}_{2}}(V_1)\leq d_1+d_2$, which gives the desired conclusion. 
\end{proof}

\begin{corollary}
\label{cor:zero}
Let $k$ be an integer with $\gcd (k,m)=1$ and let $f_1(X,Y),f_2(X,Y)$ be linearized polynomials of the form $(\ref{CDXY})$, which do not have a common factor. 
If $|\mathcal{Z}_{\mathbb{F}_{2^{km}}}(f_1,f_2)|\leq 2^{kd}$, then $|\mathcal{Z}_{\mathbb{F}_{2^{m}}}(f_1,f_2)|\leq 2^{d}$.
\end{corollary}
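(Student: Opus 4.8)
The plan is to re-run the argument of Proposition \ref{pro:zero} almost verbatim, replacing its single appeal to Bezout's theorem (which only gave the crude bound $\deg(f_1)\deg(f_2)=2^{(d_1+d_2)k}$) by the sharper hypothesis $|\mathcal{Z}_{\mathbb{F}_{2^{km}}}(f_1,f_2)|\leq 2^{kd}$. Concretely, I set $V_1=\mathcal{Z}_{\mathbb{F}_{2^m}}(f_1,f_2)$, fix an $\mathbb{F}_2$-basis $S=\{v_1,\ldots,v_\ell\}$ of $V_1$ so that $\ell=\dim_{\mathbb{F}_2}(V_1)$, and let $V_2$ be the $\mathbb{F}_{2^k}$-vector space spanned by $S$ inside $\mathbb{F}_{2^{km}}\times\mathbb{F}_{2^{km}}$ (note $\mathbb{F}_{2^m},\mathbb{F}_{2^k}\subseteq\mathbb{F}_{2^{km}}$). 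Since $\gcd(k,m)=1$ makes $\mathbb{F}_{2^m}$ and $\mathbb{F}_{2^k}$ linearly disjoint over $\mathbb{F}_2$, Lemma \ref{lem:disjoint} shows that $S$ remains linearly independent over $\mathbb{F}_{2^k}$, so that $\dim_{\mathbb{F}_{2^k}}(V_2)=\ell$ and hence $|V_2|=2^{k\ell}$.

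The first step is to check that $V_2$ consists entirely of common zeros of $f_1$ and $f_2$. As the $f_i$ are additive and, the exponents being powers of $2^k$, satisfy $f_i(cx,cy)=cf_i(x,y)$ for every $c\in\mathbb{F}_{2^k}$, any element $\sum_j c_jv_j\in V_2$ is sent to $\sum_j c_j f_i(v_j)=0$ for $i=1,2$. Because $V_2\subseteq\mathbb{F}_{2^{km}}\times\mathbb{F}_{2^{km}}$, this yields the crucial inclusion $V_2\subseteq\mathcal{Z}_{\mathbb{F}_{2^{km}}}(f_1,f_2)$.

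The second step is to feed in the hypothesis. The inclusion just established gives $2^{k\ell}=|V_2|\leq|\mathcal{Z}_{\mathbb{F}_{2^{km}}}(f_1,f_2)|\leq 2^{kd}$, so $\ell\leq d$, and therefore $|\mathcal{Z}_{\mathbb{F}_{2^m}}(f_1,f_2)|=|V_1|=2^\ell\leq 2^d$, as desired. There is no genuine obstacle; the entire content lies in the observation that the $\mathbb{F}_{2^k}$-span $V_2$ of the $\mathbb{F}_{2^m}$-solutions is again a set of $\mathbb{F}_{2^{km}}$-solutions of the same dimension, so that a bound on $|\mathcal{Z}_{\mathbb{F}_{2^{km}}}|$ transfers to one on $|\mathcal{Z}_{\mathbb{F}_{2^m}}|$ once cardinalities are compared. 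The only point meriting a line of care is extracting the correct inequality from the exponentials, namely reading $2^{k\ell}\leq 2^{kd}$ as $\ell\leq d$ rather than as the weaker $2^\ell\leq 2^{kd}$ one would get by merely noting $\mathcal{Z}_{\mathbb{F}_{2^m}}\subseteq\mathcal{Z}_{\mathbb{F}_{2^{km}}}$ directly.
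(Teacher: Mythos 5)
Your proof is correct and is exactly the argument the paper intends: the corollary is stated without proof precisely because it follows by rerunning the proof of Proposition \ref{pro:zero}, with the Bezout bound $|V_2|\leq 2^{(d_1+d_2)k}$ replaced by the hypothesis $|\mathcal{Z}_{\mathbb{F}_{2^{km}}}(f_1,f_2)|\leq 2^{kd}$ via the inclusion $V_2\subseteq\mathcal{Z}_{\mathbb{F}_{2^{km}}}(f_1,f_2)$. Your closing remark correctly identifies the essential point, namely that the linear-disjointness step upgrades the trivial inclusion $\mathcal{Z}_{\mathbb{F}_{2^m}}\subseteq\mathcal{Z}_{\mathbb{F}_{2^{km}}}$ to the equality $|V_2|=2^{k\ell}$, from which $\ell\leq d$ follows.
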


\subsection{Determining the nonlinearity of a class of quadratic functions}

Recall that for a Boolean function $g:\V_n\rightarrow \F_2$ an element $u\in \V_n$ is a {\it linear structure} of $g$ if the {\it derivative} $D_ag(X) = g(X+a)+g(X)$ in direction $a$ is constant.
The set $\Lambda_g$ of linear structures of $g$ always is a subspace of $\V_n$, the linear space of $g$. Further recall that every quadratic Boolean function is $s$-plateaued, where 
$s$ is the dimension of its linear space, see for instance \cite{carlet0}.
%

Let $f$ be a quadratic Boolean function given in bivariate trace representation as $f(X,Y) = \Trm(F(X,Y))$ for a polynomial $F \in \F_{2^m}[X,Y]$ of algebraic degree $2$. Note that the directional derivative of $F$ in the direction of $(u,v)\in \F_{2^m}\times \F_{2^m} $ is given by 
\begin{align*}
D_{u,v}f(X,Y) = \Trm(F(X+u,Y+v)+F(X,Y)) \ .
\end{align*}
Then $f$ is $s$-plateaued, where $s$ is the dimension of the linear space of $f$
\[  \Lambda_f = \{(u,v)\in \F_{2^m}\times \F_{2^m} \,:\,D_{u,v}f(X,Y)  = c, \; \; c\in \F_{2} \}. \] 
Since $F$ is quadratic, $D_{u,v}f(X,Y)$ is affine. As we are interested in the values $u$, $v$ for which $D_{u,v}f$ is constant, we can consider the linear part 
$\tilde{D}_{u,v}f(X,Y) = D_{u,v}f(X,Y) + f(u,v)$, which hence can be written in the form $\Trm(L_1(X) ) + \Trm(L_2(Y))$ for some linearized polynomials
\[ L_1(X) = \sum_{i=0}^{\delta_1}a_iX^{2^i}\quad  \text{and} \quad L_2(Y) = \sum_{i=0}^{\delta_2}b_iY^{2^i},  \]
where the coefficients depend on $u$ and $v$.  Using the fact that $\Trm(a x^{2^i}) = \Trm(a^{2^{\delta-i}}x^{2^\delta})$ for all $x\in \mathbb{F}_{2^m}$, we get
\begin{equation}
\label{AandB}
\tilde{D}_{u,v}f(X,Y) = \Trm( A(u,v)X^{2^{\delta_1}} + B(u,v)Y^{2^{\delta_2}})
\end{equation}
for some $A,B\in \F_{2^m}[u,v]$ of algebraic degree $1$.  Clearly,  $\tilde{D}_{u,v}f(X,Y)$ is zero if and only if $A(u,v)  = B(u,v) = 0$, i.e., $\Lambda_f$ is the common zero set
$\mathcal{Z}_{\F_{2^m}}(A,B)$ of $A$ and $B$.

This procedure we described in some more detail for $f$ in bivariate trace representation is of course well known. However, in general it is not easy to determine the dimension $s$ of $\Lambda_f$.
For functions for which $A(X,Y)$ and $B(X,Y)$ obtained as in $(\ref{AandB})$ are of the form $(\ref{CDXY})$, the following corollary, which is an immediate consequence of 
Proposition \ref{pro:zero} and  Corollary \ref{cor:zero}, may help.
\begin{corollary}
\label{BezNL}
Let $f(X,Y) = \Trm(F(X,Y))$ be a quadratic function from $\F_{2^m}\times\F_{2^m}$ to $\F_2$, and suppose that the corresponding linearized polynomials $A(X,Y)$ and $B(X,Y)$ in $(\ref{AandB})$
are of the form $(\ref{CDXY})$ with degrees $kd_1$ and $kd_2$ for some integer $k$ with $\gcd(k,m) = 1$. If $A$ and $B$ do not have a common factor, then $f$ is $s$-plateaued with $s\le d_1+d_2$.
If additionally the curves with defining equations $A$ and $B$ have a common point with intersection multiplicity larger than $1$, then $s < d_1+d_2$.  
\end{corollary}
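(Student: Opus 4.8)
The plan is to reduce everything to a cardinality bound for the common zero set $\mathcal{Z}_{\F_{2^m}}(A,B)$. By the discussion preceding the corollary, this zero set is exactly the linear space $\Lambda_f$, and $f$ is $s$-plateaued with $s=\dim_{\F_2}\Lambda_f$. Since $\Lambda_f$ is an $\F_2$-vector space of dimension $s$, we have $|\Lambda_f|=2^s$, so it suffices to estimate $|\mathcal{Z}_{\F_{2^m}}(A,B)|$.

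For the first inequality I would simply invoke Proposition \ref{pro:zero}. Because $A$ and $B$ are of the form $(\ref{CDXY})$, share no common factor, and have degrees $\deg A=2^{kd_1}$, $\deg B=2^{kd_2}$ with $\gcd(k,m)=1$, the proposition gives $|\mathcal{Z}_{\F_{2^m}}(A,B)|\le 2^{d_1+d_2}$. Combined with $|\Lambda_f|=2^s$ this yields $2^s\le 2^{d_1+d_2}$, i.e. $s\le d_1+d_2$, and $f$ is $s$-plateaued as recalled above.

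For the strict inequality I would reuse the mechanism inside the proof of Proposition \ref{pro:zero} and feed in Bezout's theorem at the geometric level over $\F_{2^{km}}$. First, exactly as in that proof, the scaling identities $A(cx,cy)=cA(x,y)$ and $B(cx,cy)=cB(x,y)$ for $c\in\F_{2^k}$ show that $\mathcal{Z}_{\F_{2^{km}}}(A,B)$ is an $\F_{2^k}$-vector space, so its cardinality is a power of $2^k$, say $2^{kt}$. Next I view $A=0$ and $B=0$ as projective plane curves of degrees $2^{kd_1}$ and $2^{kd_2}$; as they share no component, Bezout's theorem gives $\sum_P m(P,\mathcal{X}\cap\mathcal{Y})=2^{(d_1+d_2)k}$ over $\bar{\F}$. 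A common point of intersection multiplicity larger than $1$ then forces the number of distinct intersection points in $\P^2$ to be at most $2^{(d_1+d_2)k}-1$. Since $\mathcal{Z}_{\F_{2^{km}}}(A,B)$ is a subset of this set of distinct points, we get $2^{kt}\le 2^{(d_1+d_2)k}-1<2^{(d_1+d_2)k}$, whence $t\le d_1+d_2-1$ and $|\mathcal{Z}_{\F_{2^{km}}}(A,B)|\le 2^{k(d_1+d_2-1)}$. Applying Corollary \ref{cor:zero} with $d=d_1+d_2-1$ descends this to $|\mathcal{Z}_{\F_{2^m}}(A,B)|\le 2^{d_1+d_2-1}$, so $s\le d_1+d_2-1<d_1+d_2$.

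The main obstacle, and the only place where genuine care is required, is this last step: converting a single excess intersection multiplicity into a full drop of $s$ by one. Bezout alone only yields the crude bound $|\mathcal{Z}_{\F_{2^{km}}}(A,B)|\le 2^{(d_1+d_2)k}-1$, which a priori sits far above $2^{k(d_1+d_2-1)}$; it is precisely the $\F_{2^k}$-vector space structure of the zero set that pins its cardinality to a power of $2^k$ and thus forces the full factor-$2^k$ descent. I would also note that the multiplicity-$>1$ point is permitted to lie at infinity or to be non-rational over $\F_{2^{km}}$, but this causes no difficulty: the reduction in the count of distinct points takes place in $\P^2$ over $\bar{\F}$, and $\mathcal{Z}_{\F_{2^{km}}}(A,B)$ is in any case contained in that set of distinct points.
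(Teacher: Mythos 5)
Your proposal is correct and takes essentially the same route as the paper, which presents Corollary \ref{BezNL} as an immediate consequence of Proposition \ref{pro:zero} and Corollary \ref{cor:zero}: your argument is exactly that derivation spelled out, with the $\F_{2^k}$-vector-space structure of $\mathcal{Z}_{\F_{2^{km}}}(A,B)$ pinning its cardinality to a power of $2^k$ so that the single excess intersection multiplicity from Bezout's theorem forces the drop to $2^{k(d_1+d_2-1)}$, and Corollary \ref{cor:zero} then descending the bound to $\F_{2^m}$. You have merely made explicit the details the paper leaves implicit, and your handling of the subtle point (why one repeated intersection point yields a full factor-$2^k$ reduction) is precisely the intended mechanism.
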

In the next sections we will apply Corollary \ref{BezNL} to some classes of quadratic functions from $\F_{2^m}\times\F_{2^m}$ to $\F_{2^m}\times\F_{2^m}$, more precisely, to all of their component 
functions simultaneously. Among those are the APN-functions in \cite{C11,zp}, the recently introduced Taniguchi APN-functions, and the differentially $4$-uniform butterfly functions, \cite{cdp}.

\section{The spectrum of Taniguchi's and related APN-functions}
\label{Tguchi}

In \cite{C11}, Carlet showed that for $S,T,U,V \in \F_{2^m}$, $ST\ne 0$, and integers $i,j$ such that $\gcd(m,i-j) = 1$, the function $F:\F_{2^m}\times\F_{2^m}\rightarrow\F_{2^m}\times\F_{2^m}$,
\begin{equation}
\label{C11}
F(X,Y) = (XY,G(X,Y)), \quad G(X,Y) = SX^{2^i+2^j} + UX^{2^i}Y^{2^j} + VX^{2^j}Y^{2^i} + TY^ {2^i+1}
\end{equation}
is an APN-function if and only if $G(X,1)  = SX^{2^i+2^j} + UX^{2^i} + VX^{2^j} + T$ has no root in $\F_{2^m}$. 

In \cite{zp}, Zhou and Pott presented for $\alpha \in \F_{2^m}^*$, $m$ even, $\gcd(m,k) = 1$ and
$j\in \N$ an APN-function $F:\F_{2^m}\times\F_{2^m}\rightarrow\F_{2^m}\times\F_{2^m}$ of a similar form, 
 \begin{equation}
 \label{ZhoPo}
 F(X,Y) = (XY,G(X,Y)), \quad G(X,Y) = X^{2^k+1} + \alpha Y^ {(2^k+1)2^j} \ .
 \end{equation}
As a necessary and sufficient condition for the APN-ness of this function, in \cite{zp} the condition that $\alpha\not\in \{ a^{2^k+1}(t^{2^k}+t)^{1-2^j}\,: a,t\in\F_{2^m}  \}$ is given
and it is pointed out in \cite[Corollary 2]{zp} that this condition is satisfied if $j$ is even, and $\alpha$ is a non-cube.

In \cite{C13}, Carlet gave a general APN-criterion for some classes of functions of the form $(XY,G(X,Y))$, which simultaneously explains the APN-ness of the functions 
$(\ref{C11})$ and $(\ref{ZhoPo})$: Let $F:\F_{2^m}\times\F_{2^m}\rightarrow\F_{2^m}\times\F_{2^m}$
\begin{align}
\label{C13} \nonumber
F(X,Y) & = (XY,G(X,Y)), \quad \mbox{with} \\
G(X,Y) &  = P(X^{2^k+1}) + Q(X^{2^k}Y) + R(XY^{2^k}) + S(Y^ {2^k+1}),
\end{align}
for some homomorphisms $P,Q,R$ and $S$ of $\F_{2^m}$ and an integer $k$ with $\gcd(m,k) = 1$. For every $a,b\in \F_{2^m}$ let $T_{a,b}$ be the linear function given by
\begin{equation}
\label{Tab}
T_{a,b}(Y)  =  P(a^{2^k+1}Y) + Q(a^{2^k}bY) + R(ab^{2^k}Y) + S(b^ {2^k+1}Y) .
\end{equation}
If $m$ is odd, then $F$ is APN if and only if $T_{a,b}$ is a permutation for all $a,b$ (not both $0$). If $m$ is even, then $F$ is APN if and only if for all $a,b$ (not both $0$) we have
$\mathrm{ker} (T_{a,b}) \cap \{ u^{2^k+1}(t^{2^k}+t)\,: u,t\in\F_{2^m}  \} = \{0\}$, where $\mathrm{ker} (T_{a,b})$ is the kernel of $T_{a,b}$. 

Recently in Taniguchi \cite{tani}, another APN-function of similar shape was introduced. For an integer $k$ with $\gcd(m,k) =1$, $\alpha\in \F_{2^m}$, $\beta\in\F_{2^m}^*$,
the function $F(X,Y):\mathbb{F}_{2^m}\times \mathbb{F}_{2^m} \mapsto \mathbb{F}_{2^m}\times \mathbb{F}_{2^m}$ 
\begin{equation}
\label{Tani}
F(X,Y) = (XY,G(X,Y)), \quad G(X,Y)=X^{2^{3k}+2^{2k}}+\alpha X^{2^{2k}}Y^{2^{k}}+\beta Y^{2^{k}+1},
\end{equation}
is APN if and only if $G(X,1)=X^{2^{k}+1}+\alpha X+\beta$ has no root in $\mathbb{F}_{2^m}$. As also pointed out in \cite{tani}, if $\alpha =0$, then the function $(\ref{Tani})$ belongs to 
family $(\ref{ZhoPo})$. Hence in the following subsection we will consider Taniguchi's functions for $\alpha \ne 0$. The functions $(\ref{ZhoPo})$ will be dealt with in the subsection thereafter.

%
\subsection{Taniguchi's APN-function.}
We first remark that Taniguchi's function, which in general is CCZ-inequivalent to the APN-functions $(\ref{C11})$ and $(\ref{ZhoPo})$, see \cite{tani}, is also of the form $(\ref{C13})$ with
$P(X)  = X^{2^{2k}}$, $Q(X) = \alpha X^{2^k}$, $R(X) = 0$ and $S(X) = \beta X$. In fact one can confirm the APN-property for the function $(\ref{Tani})$ with Carlet's criterion as follows:
For Taniguchi's function, $T_{a,b}$ defined as in $(\ref{Tab})$ equals
\[ T_{a,b}(Y) = a^{2^{3k}+2^{2k}}Y^{2^{2k}} + \alpha a^{2^{2k}}b^{2^k}Y^{2^k} + \beta b^{2^k+1}Y. \]
If either $a=0$ or $b=0$ then the only solution for $T_{a,b}(Y) = 0$ is $Y=0$. To determine the kernel of $T_{a,b}(Y)$ when $ab \ne 0$, we substitute $bY$ by $z$ and then divide by $z$, which yields
\[ \frac{a^{2^{3k}+2^{2k}}}{b^{2^{2k}}}z^{2^{2k}-1} + \alpha a^{2^{2k}}z^{2^k-1} + \beta b^{2^k}. \]
Dividing by $b^{2^k}$ and replacing $z^{2^k-1}$ with $Z$  we obtain
\[ \left(\frac{a^{2^{2k}}}{b^{2^k}}\right)^{2^k+1}Z^{2^k+1} + \alpha\frac{a^{2^{2k}}}{b^{2^k}}Z + \beta. \]
Finally with $X = (a^{2^{2k}}/b^{2^k})Z$ we see that the kernel of $T_{a,b}(Y)$ is trivial for all $a,b$ with $ab \ne 0$ if and only if
\[ X^{2^k+1} + \alpha X + \beta = 0 \]
does not have a solution in $\F_{2^m}$. Therefore, with Carlet's criterion the APN-property is confirmed. 
%
%

In the next theorem we present the Walsh spectrum of Taniguchi's function. We remark that the function has the classical spectrum independent from the fact whether the APN-condition
is satisfied or not. 
%
\begin{theorem}
\label{TThm}
Let $F:\F_{2^m}\times\F_{2^m}\rightarrow \F_{2^m}\times\F_{2^m}$ be given as $F(X,Y) = (XY,G(X,Y))$ with 
\[ G(X,Y)=X^{2^{3k}+2^{2k}}+\alpha X^{2^{2k}}Y^{2^{k}}+\beta Y^{2^{k}+1}, \]
where $\gcd(m,k) = 1$, $\alpha,\beta\in\F_{2^m}$, $\alpha\beta\ne 0$. Then the Walsh spectrum of $F$ is $W_F = \{0,\pm 2^{n/2}, \pm 2^{(n+2)/2}\}$. In particular, Taniguchi's APN-function 
has the classical spectrum, i.e., $2(2^{2m}-1)/3$ component functions are bent, the remaining component functions are semibent.  	
\end{theorem}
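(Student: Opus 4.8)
The plan is to apply Corollary \ref{BezNL} to every component function of $F$ simultaneously, and to show that each component is either bent ($s=0$) or semibent ($s=2$), where $s$ is the dimension of the linear space. Here $n = 2m$, so bent means $|\widehat{f}| = 2^{n/2} = 2^m$ and semibent means the Walsh spectrum is $\{0,\pm 2^{(n+2)/2}\} = \{0,\pm 2^{m+1}\}$. A component function of $F$ is indexed by a nonzero $(c,d)\in\F_{2^m}\times\F_{2^m}$ and is given by
\[ f_{c,d}(X,Y) = \Trm\bigl(c\,XY + d\,G(X,Y)\bigr). \]
For each such component I would compute the linear part of the directional derivative and put it into the shape $(\ref{AandB})$, obtaining two linearized polynomials $A(X,Y)$ and $B(X,Y)$ of the form $(\ref{CDXY})$ in the variable $2^k$ (the exponents of $X$ in $G$ are $2^{3k}, 2^{2k}$ and of $Y$ are $2^k, 2^0$, so after the Frobenius normalization $\Trm(ax^{2^i}) = \Trm(a^{2^{\delta-i}}x^{2^\delta})$ the two polynomials should have $2^k$-degrees $d_1, d_2$ with $d_1 + d_2 = 2$). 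Since $\gcd(k,m)=1$, Corollary \ref{BezNL} then bounds $s \le d_1 + d_2 = 2$, forcing each component to be bent or semibent, and the only remaining task is to rule out the $4$-plateaued case and to pin down that exactly $2(2^{2m}-1)/3$ components are bent.

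Concretely, first I would carry out the derivative computation. Writing the partial derivatives of $c\,XY + d\,G(X,Y)$ with respect to $X$ and $Y$ (in the additive, $\F_2$-linear sense), collecting terms, and applying the trace-adjoint trick, I expect to land on something like
\[ A(u,v) = c\,v^{\sigma} + d\,(\text{terms in } u^{2^{2k}}, u^{?}, v^{?}), \qquad B(u,v) = c\,u^{\sigma} + d\,\beta\,v^{?} + \cdots, \]
where each of $A, B$ is linearized of the stated form $(\ref{CDXY})$. The precise coefficients matter only insofar as they determine $d_1, d_2$ and whether $A, B$ share a common factor, so I would record them carefully but not belabor them. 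The key structural point is that the top-degree parts come from $X^{2^{3k}+2^{2k}}$ (contributing a $u^{2^{3k}}$ or $u^{2^{2k}}$ term after differentiation) and from $Y^{2^k+1}$, so the $2^k$-degrees split as $1+1$, giving $d_1 = d_2 = 1$ and $s \le 2$.

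The main obstacle, and where I would spend the most effort, is the common-factor hypothesis in Corollary \ref{BezNL}: I must verify that for every nonzero $(c,d)$ the two linearized polynomials $A$ and $B$ share no common factor. If they did, the curves could have a whole common component and Bezout would not apply. I would split into cases according to whether $c = 0$ or $c\ne 0$ (and likewise handle $d=0$), since the interesting coupling between $X$ and $Y$ comes from the $c\,XY$ term. A clean way to argue no common factor is to show that $A$ and $B$ have no common zero other than the origin in a suitable extension, or to exhibit that any common factor would force a relation contradicting $\gcd(k,m)=1$ or $\alpha\beta\ne 0$; I anticipate that the case $c=0$ (pure $\Trm(dG)$) is the delicate one and may need the nondegeneracy coming from $\beta\ne 0$. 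Once the no-common-factor claim is secured, $s\le 2$ follows uniformly, and since $n=2m$ is even with $s$ necessarily even (as $n+s$ must be even), each component is bent ($s=0$) or semibent ($s=2$); the $4$-plateaued value $s=4$ is excluded by the bound. Finally, to get the exact count $2(2^{2m}-1)/3$ bent components I would invoke the general fact quoted in the introduction: a quadratic APN (here more generally a function whose every component is bent or semibent) on $\V_n$, $n$ even, has at least $2(2^n-1)/3$ bent components with equality iff all remaining components are semibent. Since we have just shown every component is bent or semibent, equality holds and the count is forced, completing the identification of the classical spectrum; I would emphasize that this conclusion does not use the APN condition $X^{2^k+1}+\alpha X+\beta$ having no root, matching the theorem's claim that the spectrum is classical independent of APN-ness.
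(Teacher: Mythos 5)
Your plan founders on the degree computation, and this is not a detail but the crux of the theorem. Writing $(\lambda,\mu)$ for your $(c,d)$: the derivative of $\Trm(\mu G)$ contains, from the term $X^{2^{3k}+2^{2k}}$, both $X^{2^{3k}}u^{2^{2k}}$ and $X^{2^{2k}}u^{2^{3k}}$; when the second summand is aligned to the exponent $2^{3k}$ via $\Trm(aX^{2^{2k}})=\Trm(a^{2^k}X^{2^{3k}})$, its coefficient becomes $\mu^{2^k}u^{2^{4k}}$. So the polynomial $A$ has $2^k$-degree $4$ as written, and even after taking $2^{2k}$-th roots (as the paper does) one gets
\begin{align*}
A(X,Y)&=   \mu^{2^{-k}}\alpha^{2^{-k}}X+ \mu^{2^{-2k}}Y  + \lambda^{2^{k}}X^{2^{k}}+ \mu^{2^{-k}}Y^{2^{2k}}, \\
B(X,Y) & = \mu \beta X + \lambda^{2^k}Y^{2^k} + \mu^{2^k}\beta^{2^k}X^{2^{2k}} + \mu \alpha Y^{2^{2k}},
\end{align*}
both of degree $2^{2k}$, i.e.\ $d_1=d_2=2$. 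Your claim that the $2^k$-degrees split as $1+1$ is false (for $\mu\ne 0$ the coefficient $\mu^{2^{-k}}$ of $Y^{2^{2k}}$ in $A$ is nonzero), so Corollary \ref{BezNL} gives only $s\le 4$, and the $4$-plateaued case is exactly what the bound does \emph{not} exclude; your later remark that ``$s=4$ is excluded by the bound'' is circular. Moreover, the step you expected to be the main obstacle is in fact the easy part: for $\mu\ne 0$ the curves defined by $A$ and $B$ have distinct unique points at infinity, $(1:0:0)$ and $\left((\mu\alpha)^{2^{-2k}}:(\mu\beta)^{2^{-k}}:0\right)$, distinct precisely because $\mu\beta\ne 0$, so they cannot share a component (uniformly in $\lambda$); and the case $\mu=0$ is immediate, since then $A=B=0$ forces $X=Y=0$ and $F_{\lambda,0}$ is bent.

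What is genuinely missing from your proposal is the second clause of Corollary \ref{BezNL}: one must exhibit a common point of the two curves with intersection multiplicity larger than $1$, which yields $s<4$ and hence $s\le 2$ by parity. The paper does this by contradiction. If the curves met in $2^{4k}$ distinct (necessarily affine) points, consider the auxiliary curves defined by $h_1=AB$ (degree $2^{2k+1}$) and $h_2=YA+B$ (degree $2^{2k}+1$), which have no common component; both partial derivatives of $h_1$ are $\F_{2^m}$-combinations of $A$ and $B$, so each of the $2^{4k}$ intersection points is a singular point of the first curve and thus an intersection point of the two auxiliary curves of multiplicity at least $2$; in addition $(1:0:0)$ is a common point with multiplicities $2^{2k}$ and $2^{2k}+1$. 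Summing gives at least $2^{4k+1}+2^{4k}+2^{2k}>2^{2k+1}(2^{2k}+1)$, contradicting Bezout's theorem. Without this argument (or a substitute) your proof establishes only that every component is at most $4$-plateaued, which is strictly weaker than the theorem. A separate smaller slip: the exact count $2(2^{2m}-1)/3$ of bent components does not follow merely from every component being bent or semibent; it uses the APN property (via the fourth-moment identity), which is why the theorem phrases that conclusion only for Taniguchi's APN-function.
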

\begin{proof}
%
We start with some preparations and determine $A$ and $B$ described as in $(\ref{AandB})$.
For $\lambda, \mu \in\F_{2^m}$ the component function  $F_{\lambda,\mu}$ of Taniguchi's function is
\[ F_{\lambda,\mu}(X,Y) = {\rm Tr}_m\left(\lambda XY + \mu(x^{2^{2k}+2^{3k}}+\alpha X^{2^{2k}}Y^{2^k}+\beta Y^{2^k+1})\right). \]
Then for $u,v\in \F_{2^m}$ and $\tilde{D}_{(u,v)}F_{\lambda,\mu}(X,Y) = D_{(u,v)}F_{\lambda,\mu}(X,Y) + F_{\lambda,\mu}(u,v)$ we get
\begin{align*}
\tilde{D}_{(u,v)}F_{\lambda,\mu}(X,Y) & = {\rm Tr}_m\left(\lambda(vX+uY) + \mu(X^{2^{2k}}u^{2^{3k}} + X^{2^{3k}}u^{2^{2k}} \right. )  \\
& \qquad \qquad  \left. +  \mu \alpha (X^{2^{2k}}v^{2^k} + Y^{2^k}u^{2^{2k}}) +  \mu \beta (Y^{2^k}v + Yv^{2^k})\right) \\
& = {\rm Tr}_m \left(X\lambda v+ X^{2^{2k}}(\mu u^{2^{3k}}+\mu \alpha v^{2^k}) + X^{2^{3k}}(\mu u^{2^{2k}})\right)  \\
&\qquad \qquad + {\rm Tr}_m \left(Y(\lambda u + \mu \beta v^{2^k})+ Y^{2^k}( \mu \alpha u^{2^{2k}}+\mu \beta v)\right) \\
& = {\rm Tr}_m(AX^{2^{3k}}) + {\rm Tr}_m(BY^{2^k}),
\end{align*}
where
\begin{align*}
A & = \lambda^{2^{3k}}v^{2^{3k}} + \mu^{2^k}u^{2^{4k}} + \mu^{2^k}\alpha^{2^k}v^{2^{2k}} + \mu u^{2^{2k}}, \\
B & = \lambda^{2^k}u^{2^k} + \mu^{2^k}\beta^{2^k}v^{2^{2k}} + \mu \alpha u^{2^{2k}} + \mu \beta v.
\end{align*}
As ${\rm Tr}_m(AX^{2^{3k}}) + {\rm Tr}_m(BY^{2^k}) = 0$ for all $X,Y \in\F_{2^m}$ if and only if $A=B=0$, putting $u = Y$ and $v = X$,
we have the equations
\begin{align*}
A(X,Y) & = \mu^{2^k}\alpha^{2^k}X^{2^{2k}} + \mu Y^{2^{2k}} + \lambda^{2^{3k}}X^{2^{3k}} + \mu^{2^k}Y^{2^{4k}} = 0, \\
B(X,Y) & = \mu \beta X + \lambda^{2^k}Y^{2^k} + \mu^{2^k}\beta^{2^k}X^{2^{2k}} + \mu \alpha Y^{2^{2k}} = 0,
\end{align*}
or equivalently, 
\begin{align}\label{eq:XY}
A(X,Y)&=   \mu^{2^{-k}}\alpha^{2^{-k}}X+ \mu^{2^{-2k}}Y  + \lambda^{2^{k}}X^{2^{k}}+ \mu^{2^{-k}}Y^{2^{2k}} =0 \\ \nonumber
B(X,Y) & = \mu \beta X + \lambda^{2^k}Y^{2^k} + \mu^{2^k}\beta^{2^k}X^{2^{2k}} + \mu \alpha Y^{2^{2k}} = 0 \ .
\end{align}
Note that $A$ and $B$ are of the form required to apply Corollary \ref{BezNL}. In the first step we will illustrate that $A(X,Y)$ and $B(X,Y)$ do not have  a common component.
Then by Corollary \ref{BezNL} we infer that then $F_{\lambda,\mu}$ is $s$-plateaued with $s$ at most $4$. In the second step we will show that the curves defined by $A$ and $B$
have a common point with intersection multiplicity larger than $1$, which implies $s < 4$. Since $s$ has to be even, we conclude that $F_{\lambda,\mu}$ is bent or semibent,
and the proof is completed. 

First of all we consider the case $\mu =0$. Note that in this case $\lambda \neq 0$. Then Equation \eqref{eq:XY} holds if and only if $\lambda^{2^{k}}X^{2^{k}}=\lambda^{2^{k}}Y^{2^{k}}=0$, 
which holds if and only if $X=Y=0$. Therefore any component function $F_{0,\mu}$ is bent. 
%
%

For $\mu \ne 0$ we consider the curves $\mathcal{X}_1$ and $\mathcal{X}_2$ defined by $A(X,Y) $, $B(X,Y)$ in Equation \eqref{eq:XY}, respectively.
%
We observe that $P_1=(1:0:0)$ and $P_2=((\mu \alpha)^{2^{-2k}}:(\mu \beta)^{2^{-k}}:0)$ are the unique points at infinity of $\mathcal{X}_1$ and $\mathcal{X}_2$, respectively. Since 
$\mu \beta \neq 0$, the points $P_1$ and $P_2$ are distinct.
In particular, this shows that $\mathcal{X}_1$ and $\mathcal{X}_2$ do not have a common component. Consequently by Corollary \ref{BezNL}, the component function $F_{\lambda,\mu}$ is $s$-plateaued 
with $s$ at most $4$.
%
%

It remains to show that $\mathcal{X}_1$ and $\mathcal{X}_2$ have a common point with intersection multiplicity larger than $1$. 
Suppose the opposite, i.e.,
suppose that $\mathcal{X}_1$ and $\mathcal{X}_2$ intersect in exactly $2^{4k}$ distinct points. Note that as they do not have any intersection at infinity, all those intersection points are affine. Now we consider the curves 
$\mathcal{Y}_1$ and $\mathcal{Y}_2$ defined by the equations
\begin{align*}
h_1(X,Y)=A(X,Y)B(X,Y) \quad \text{and} \quad h_2(X,Y)=YA(X,Y)+B(X,Y) \ ,
\end{align*}
which have degree $2^{2k+1}$ and $2^{2k}+1$, respectively. Note that $Y$ is not a factor of $B(X,Y)$, otherwise $(1:0:0)$ would be a point of $\mathcal{X}_2$ at infinity. Hence we conclude that $\mathcal{Y}_1$ and $\mathcal{Y}_2$ 
are curves which do not have a common component. By defining equations of $\mathcal{Y}_1$ and $\mathcal{Y}_2$, any intersection point of $\mathcal{X}_1$ and $\mathcal{X}_2$ is also an intersection point of $\mathcal{Y}_1$ and $\mathcal{Y}_2$. By Bezout's theorem, $\mathcal{Y}_1$ and $\mathcal{Y}_2$ intersect in at most $2^{2k+1}(2^{2k}+1)=2^{4k+1}+2^{2k+1}$ points. Moreover, we have 
\begin{align*}
\frac{\partial h_1(X,Y)}{\partial X}&= \mu^{2^{-k}}\alpha^{2^{-k}}B(X,Y)+\mu \beta A(X,Y)\quad \text{and}\quad \\
 \frac{\partial h_1(X,Y)}{ \partial Y}&= \mu^{2^{-2k}}B(X,Y) \ ,
\end{align*}
i.e., any intersection point $P$ of $\mathcal{X}_1$ and $\mathcal{X}_2$ is a singular point of $\mathcal{Y}_1$. This implies that $m(P, \mathcal{Y}_1 \cap \mathcal{Y}_2)\geq 2$.  In particular, we have 
\begin{align*}
\sum_{P\in \mathcal{X}_1 \cap \mathcal{X}_2} m(P, \mathcal{Y}_1 \cap \mathcal{Y}_2) \geq 2^{4k}2=2^{4k+1} \ .
\end{align*}
Furthermore, $P_1=(1:0:0)$ is a common point of $\mathcal{Y}_1$ and $\mathcal{Y}_2$ with 
$m_{P_1}(\mathcal{Y}_1)= 2^{2k}$ and $m_{P_1}(\mathcal{Y}_2)= 2^{2k}+1$.  Hence $\mathcal{Y}_1$ and $\mathcal{Y}_2$ have intersection multiplicity at $P_1$ at least $2^{2k}(2^{2k}+1)=2^{4k}+2^{2k}$. 
However, then the intersection multiplicity of $\mathcal{Y}_1$ and $\mathcal{Y}_2$ is at least
\begin{align*}
2^{4k+1}+2^{4k}+2^{2k}> 2^{4k+1}+2^{2k+1} \ ,
\end{align*}
which gives a contradiction.
\end{proof}
%
\subsection{Carlet's APN-function and the Zhou-Pott function}

As shown in \cite{tqlt}, Carlet's APN-function $(\ref{C11})$ and all functions $(\ref{ZhoPo})$ of  Zhou and Pott which are APN have  the classical spectrum.
For the latter assertion we refer to our Proposition \ref{ZPAPN} below.
In this subsection we point out that also for these two classes the corresponding functions $A(X,Y)$ and $B(X,Y)$ obtained as above,  are of the form required to apply 
our approach via Bezout's theorem. This leads to a quite simple proof for those functions having the classical spectrum.

We start with the function $(\ref{C11})$ and show the spectrum more general for all variations of the function. As we will see, the function has the classical spectrum for
most of the choices of $S,T,U,V$ in $(\ref{C11})$, independent from the property of being APN.
\begin{corollary}
\label{C11Spec}
For integers $i,j$ such that $\gcd(i-j,m)=1$ and $S,T,U,V \in \F_{2^m}$, $ST\ne 0$, let $F:\F_{2^m}\times\F_{2^m}\rightarrow \F_{2^m}\times\F_{2^m}$ be the function 
$F(X,Y) = (XY,G(X,Y))$ with 
\[ G(X,Y) = SX^{2^i+2^j} + UX^{2^i}Y^{2^j} + VX^{2^j}Y^{2^i} + TY^ {2^i+1}. \]
Then the Walsh spectrum of $F$ is $W_F = \{0,\pm 2^{n/2}, \pm 2^{(n+2)/2}\}$, unless $U = \alpha T$, $V=\alpha^{2^{j-i}}T$ and $ S = \alpha^{2^{j-i}+1}T$ for some nonzero
$\alpha \in \F_{2^m}$. In particular, if $F$ is APN, then $F$ has the classical spectrum. 
If $U = \alpha T$, $V=\alpha^{2^{j-i}}T$ and $S = \alpha^{2^{j-i}+1}T$, then the nonlinearity of $F$ is $NL(F)  = 2^{2m-1}-2^{3m/2}$ if $m$ is even, and $NL(F)  = 2^{2m-1}-2^{(3m-1)/2}$ if $m$ is odd.
\end{corollary}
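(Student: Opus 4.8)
The plan is to follow exactly the template established in the proof of Theorem~\ref{TThm}, applied now to Carlet's function $(\ref{C11})$. First I would compute the linear part $\tilde D_{(u,v)}F_{\lambda,\mu}$ of a generic component function $F_{\lambda,\mu}(X,Y) = \Trm(\lambda XY + \mu G(X,Y))$, collect the coefficients of $X^{2^i}$ and $Y^{2^i}$ after the standard adjoint trick $\Trm(ax^{2^i}) = \Trm(a^{2^{\delta-i}}x^{2^\delta})$, and substitute $u=Y$, $v=X$ to obtain two linearized polynomials $A(X,Y)$ and $B(X,Y)$ of the form $(\ref{CDXY})$ in the variable exponent $k := i-j$ (after normalizing powers of $2$). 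The hypothesis $\gcd(i-j,m)=1$ is precisely what lets me invoke Corollary~\ref{BezNL}. Since each of the two summands $SX^{2^i+2^j}$ and $TY^{2^i+1}$ contributes a pure $X$-term and a pure $Y$-term with coefficients forced by $ST\neq 0$, I expect $A$ and $B$ each to have degree $2^{2k}$ (so $d_1=d_2=2$), giving $s \le 4$ as soon as $A,B$ share no common factor.

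Next I would analyse the points at infinity of the curves $\mathcal{X}_1, \mathcal{X}_2$ defined by $A,B$, exactly as in Theorem~\ref{TThm}. The two leading forms are binary quadratic-type expressions in $X,Y$ (homogeneous of degree $2^{2k}$), and I would show that for generic $S,T,U,V$ the unique points at infinity of $\mathcal{X}_1$ and $\mathcal{X}_2$ are distinct, whence $\mathcal{X}_1,\mathcal{X}_2$ have no common component and $F_{\lambda,\mu}$ is $s$-plateaued with $s\le 4$; then the auxiliary-curve argument with $h_1 = AB$ and $h_2 = YA+B$ (or a suitable linear combination adapted to whichever point is simplest) forces a common point of intersection multiplicity $\ge 2$, pushing $s<4$ and hence (parity) into $\{0,2\}$, the classical spectrum. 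The new feature relative to Theorem~\ref{TThm} is the \emph{degenerate locus}: the leading forms of $A$ and $B$ coincide projectively exactly when the four coefficients satisfy a proportionality relation, and a short computation should show this happens precisely when $U=\alpha T$, $V=\alpha^{2^{j-i}}T$, $S=\alpha^{2^{j-i}+1}T$ for some $\alpha\in\F_{2^m}^*$. In that case $G(X,Y)$ factors through $X^{2^j}+\alpha Y^{2^j}$-type structure, $A$ and $B$ acquire a common factor, Bezout no longer applies, and one must compute the spectrum directly; this degenerate branch is where I would substitute to find $NL(F) = 2^{2m-1}-2^{3m/2}$ ($m$ even) or $2^{2m-1}-2^{(3m-1)/2}$ ($m$ odd), recognizing $F$ there as (equivalent to) a sum decomposing into a bent/Gold-type piece whose plateau level is governed by $\gcd$-parity.

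For the final sentence "\emph{if $F$ is APN then $F$ has the classical spectrum},'' I would argue that the excluded degenerate family cannot be APN: under $U=\alpha T$, $V=\alpha^{2^{j-i}}T$, $S=\alpha^{2^{j-i}+1}T$, the polynomial $G(X,1) = SX^{2^i+2^j}+UX^{2^i}+VX^{2^j}+T$ factors (one checks $G(X,1) = T(\alpha^{2^{j-i}}X^{2^j}+1)(\alpha X^{2^i-2^j+\cdots}+\cdots)$ or more cleanly that it has the obvious root coming from the proportionality), so by Carlet's criterion stated above $F$ is \emph{not} APN. Hence APN-ness excludes exactly the non-classical branch, and the corollary follows.

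The main obstacle I anticipate is twofold. First, pinning down the precise proportionality condition: the substitution producing $A,B$ in the normalized form $(\ref{CDXY})$ mixes Frobenius twists of $S,T,U,V$, so the "common leading form'' condition must be read off carefully and matched to the stated relations $U=\alpha T$, $V=\alpha^{2^{j-i}}T$, $S=\alpha^{2^{j-i}+1}T$ — getting the Frobenius exponents and the role of $i$ versus $j$ correct is the delicate bookkeeping. Second, the degenerate case requires an honest direct nonlinearity computation rather than the clean Bezout dichotomy; there one shows the single surviving nontrivial component-type reduces to a Gold function $x\mapsto \Trm(cx^{2^{i-j}+1})$ on $\F_{2^m}$ whose Walsh spectrum is classical-or-not according to the parity of $m$, yielding the two stated values of $NL(F)$. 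I expect the Bezout portion (the generic case) to go through essentially verbatim as in Theorem~\ref{TThm}, with the real work concentrated in identifying and resolving the degenerate family.
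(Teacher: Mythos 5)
Your generic-case plan is exactly the paper's: the same polynomials $A$, $B$ of degree $2^{2k}$ (with $k=j-i$), the same points-at-infinity argument to rule out a common component, essentially the same auxiliary curves $h_1=AB$ and $h_2=(X+\eta Y)A+B$ to force an intersection point of multiplicity at least $2$, and the same closing observation that under the three exceptional relations $G(X,1)$ has a root (namely $\gamma$ with $\gamma^{2^i}=\alpha^{-1}$), so $F$ is not APN. The genuine gap is in your identification of the degenerate locus. Writing $\tilde S=\mu S$, $\tilde T=\mu T$, $\tilde U=\mu U$, $\tilde V=\mu V$, the leading forms of $A$ and $B$ are $(\tilde S^{2^{-k}}X+\tilde U^{2^{-k}}Y)^{2^{2k}}$ and $(\tilde V^{2^{-k}}X+\tilde T^{2^{-k}}Y)^{2^{2k}}$, so the two points at infinity coincide if and only if $ST=UV$, equivalently $S/V=U/T=\alpha$ for some nonzero $\alpha$: this is \emph{two} relations on $(S,T,U,V)$, not the three relations $U=\alpha T$, $V=\alpha^{2^k}T$, $S=\alpha^{2^k+1}T$ of the statement. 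For instance, $U=T$, $V=S$ with $S\ne T$ makes the points at infinity coincide while violating the exceptional condition. Hence your claimed equivalence (``the leading forms coincide \ldots precisely when $U=\alpha T$, $V=\alpha^{2^{j-i}}T$, $S=\alpha^{2^{j-i}+1}T$'') is false, and with it your dichotomy: there is a whole intermediate family of parameters on which Bezout cannot be invoked directly (coincident points at infinity), yet whose spectrum is still classical. Your plan would misclassify these functions as non-classical, contradicting the very statement being proved.

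Handling this intermediate family is where the bulk of the paper's proof lies, and your proposal contains no substitute for it. When $P_1=P_2$ one cannot conclude that $A$ and $B$ ``acquire a common factor'' (in general they do not); instead the paper replaces $A$ by $A+\alpha^{2^k}B$, which cancels the degree-$2^{2k}$ terms, and then runs a fresh case analysis on the lower-degree system: for $\lambda\ne 0$ a new points-at-infinity and tangent-line argument gives bent or semibent, and for $\lambda=0$ everything hinges on whether $\alpha^{2^k}\tilde T+\tilde V$ vanishes. Only when it does vanish --- which together with $P_1=P_2$ is exactly the three-relation condition --- does the system collapse to the single equation $\tilde TZ+\tilde T^{2^k}Z^{2^{2k}}=0$ in $Z=\alpha X+Y$, whose $2^{m+1}$ ($m$ odd) or $2^{m+2}$ ($m$ even, $\mu$ suitable) solution pairs produce the stated nonlinearities; your Gold-type heuristic for that final count is in the right spirit, but without the degree-reduction step and the intermediate case analysis the proof does not go through.
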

\begin{proof}
For Carlet's function $(\ref{C11})$, the component function $F_{\lambda,\mu}$ for $\lambda,\mu\in\F_{2^m}$ is given by
\[ F_{\lambda,\mu}(X,Y) = {\rm Tr}_m(\lambda XY + \mu(SX^{2^i+2^j} + UX^{2^i}Y^{2^j} + VX^{2^j}Y^{2^i} + TY^ {2^i+1})). \]
With the analog calculations as above, for $\tilde{D}_{u,v}F_{\lambda,\mu}$ we get 
\begin{align*}
A(X,Y) & = \mu SX + \mu VY + \lambda^{2^j}Y^{2^{j-i}} + \mu^{2^{j-i}}S^{2^{j-i}}X^{2^{2(j-i)}} + \mu^{2^{j-i}}U^{2^{j-i}}Y^{2^{2(j-i)}}, \\
B(X,Y) & = \mu UX + \mu TY + \lambda^{2^j}X^{2^{j-i}} + \mu^{2^{j-i}}V^{2^{j-i}}X^{2^{2(j-i)}} + \mu^{2^{j-i}}T^{2^{j-i}}Y^{2^{2(j-i)}}.
\end{align*}
If $\mu =0$, then $A(X,Y) =B(X,Y)=0 $ if and only if $X=Y=0$, i.e., the corresponding component function is bent. From now on we assume that $\mu\neq 0$. 
We set $k=j-i$ and, for simplicity, we replace $\mu S$, $\mu T$,$\mu V$, $\mu U$ by $\tilde S$, $\tilde T$, $\tilde V$, $\tilde U$, respectively. 
Then we have 
\begin{align}
\label{CAB} \nonumber
A(X,Y) & = \tilde SX + \tilde VY + \lambda^{2^j}Y^{2^{k}} + \tilde S^{2^{k}}X^{2^{2k}} + \tilde U^{2^{k}}Y^{2^{2k}} , \\
B(X,Y) & =  \tilde UX + \tilde TY + \lambda^{2^j}X^{2^{k}} + \tilde V^{2^{k}}X^{2^{2k}} + \tilde T^{2^{k}}Y^{2^{2k}}.
\end{align} 
Let $\mathcal{X}_1$ and $\mathcal{X}_2$ be the curves defined by $A(X,Y)$ and $B(X,Y)$, respectively. The points at infinity are $P_1=(\eta: 1:0)$ for $\mathcal{X}_1$ and $P_2=(1:\zeta:0)$
for  $\mathcal{X}_2$, where 
\begin{align*}
\eta^{2^k}=\frac{ \tilde {U}}{\tilde{S}} \quad \text{and} \quad \zeta^{2^k}=\frac{\tilde V}{\tilde T} .
\end{align*}
We first consider the case that $P_1\ne P_2$. By Bezout's Theorem and Equation \eqref{CAB}, the curves $\mathcal{X}_1$ and $\mathcal{X}_2$ intersect in at most $2^{4k}$ points, which are all affine by our assumption. 
Suppose they intersect in exactly $2^{4k}$ points. Similarly as in the proof of Theorem \ref{TThm} we consider 
\begin{align*}
h_1(X,Y)=A(X,Y)B(X,Y) \quad \text{and} \quad  h_2(X,Y)=(X+\eta Y)A(X,Y)+B(X,Y)  \ .
\end{align*}
Let $\mathcal{Y}_1$ and $\mathcal{Y}_2$ be the curves defined by $h_1(X,Y)$ and $h_2 (X,Y)$, respectively. Note that with $\mathcal{X}_1$ and $\mathcal{X}_2$, also $\mathcal{Y}_1$ and $\mathcal{Y}_2$ do not have 
any common component. Furthermore, any intersection point of $\mathcal{X}_1$ and $\mathcal{X}_2$ is also an intersection point of $\mathcal{Y}_1$ and $\mathcal{Y}_2$ and a singular point of 
$\mathcal{Y}_1$. That is, we have
\begin{align*}
\sum_{P\in \mathcal{X}_1 \cap \mathcal{X}_2} m(P, \mathcal{Y}_1 \cap \mathcal{Y}_2) \geq 2^{4k}2=2^{4k+1} \ .
\end{align*}
Moreover, $P_1=(\eta:1:0)$ is a point of $\mathcal{Y}_1$ and $\mathcal{Y}_2$ of multiplicity $2^{2k}$ and $2^{2k}+1$, respectively, i.e., $\mathcal{Y}_1$ and $\mathcal{Y}_2$ intersect at $P_1$ with  multiplicity
at least $2^{2k}(2^{2k}+1)=2^{4k}+2^{2k}$. However, then the intersection multiplicity of $\mathcal{Y}_1$ and $\mathcal{Y}_2$ is at least
\begin{align*}
2^{4k+1}+2^{4k}+2^{2k}> \mathrm{deg}(\mathcal{Y}_1)\; \mathrm{deg}(\mathcal{Y}_2)=(2^{2k}+2^{2k})(2^{2k}+1)=2^{2k+1}(2^{2k}+1) \ ,
\end{align*}
which contradicts Bezout's Theorem.  Therefore, by Proposition \ref{pro:zero} and Corollary \ref{BezNL}, the 
number of solutions of $A(X,Y)=B(X,Y)=0$ is less than $2^4$, hence $F_{\lambda,\mu}$ is bent or semibent.

To consider the case $P_1 = P_2$ we first note that this implies $\tilde{U}\tilde{V}\ne 0$.
Observe that $P_1=P_2$ if and only if $\tilde{S}/\tilde{V} = \tilde{U}/\tilde{T} = \alpha$ for some nonzero $\alpha\in \F_{2^m}$, or equivalently
\begin{align}\label{eq:SUVT}
\tilde{S} = \alpha\tilde{V} \quad \text{and} \quad \tilde{U} = \alpha\tilde{T} \ .
\end{align}
In this case, by Equation \eqref{eq:SUVT} and by replacing $A(X,Y)$ with $A(X,Y)+\alpha^{2^k}B(X,Y)$ we obtain 
\begin{align*}
A(X,Y)&=\alpha (\alpha^{2^k}\tilde{T}+ \tilde{V})X+(\alpha^{2^k}\tilde{T}+ \tilde{V})Y+\lambda^{2^j}Y^{2^k}+\alpha^{2^k}\lambda^{2^j}X^{2^k}\\
B(X,Y)&=\alpha \tilde{T}X+\tilde{T}Y+\lambda^{2^j}X^{2^k}+\tilde{V}^{2^k}X^{2^{2k}}+\tilde{T}^{2^k}Y^{2^{2k}} \ .
\end{align*}

\noindent We first consider the case $\lambda=0$, for which we have 
\begin{align*}
A(X,Y)&=\alpha (\alpha^{2^k}\tilde{T}+ \tilde{V})X+(\alpha^{2^k}\tilde{T}+ \tilde{V})Y\\
B(X,Y)&=\alpha \tilde{T}X+\tilde{T}Y+\tilde{V}^{2^k}X^{2^{2k}}+\tilde{T}^{2^k}Y^{2^{2k}} \ .
\end{align*}
We have to distinguish two cases.
\begin{itemize}
\item[(i)] $\alpha^{2^k}\tilde{T}+ \tilde{V}=0$.\\
In this case we have 
$A(X,Y)=0$ and
\begin{align*}
B(X,Y)=\tilde{T}(\alpha X+Y)+\tilde{T}^{2^k}(\alpha X+Y)^{2^{2k}} \ .
\end{align*}
Set $Z=\alpha X+Y$, then we have $B(X,Y)=B(Z)=\tilde{T}Z+\tilde{T}^{2^k}Z^{2^{2k}}$. Note that for $z\in \mathbb{F}_{2^m}$ such that $B(z)=0$, we have $2^m$ pairs $(x,y)\in \mathbb{F}_{2^m}\times  \mathbb{F}_{2^m} $ such that $B(x,y)=0$. Since $\gcd (k,m)=1$, we have $\tilde{T}Z+\tilde{T}^{2^k}Z^{2^{2k}}=0$ if and only if $Z^{2^k+1}=\tilde{T}^{-1}$. Hence we conclude that the number of solutions of $B(X,Y)=0$ is $2^{m+2}$ if $m$ is even and $2^{m+1}$ if $m$ is odd.
\item[(ii)] $\alpha^{2^k}\tilde{T}+ \tilde{V} \neq 0$, i.e., $\alpha^{2^k}\neq \tilde{V}/\tilde{T}$.\\
Then we have 
\begin{align*}
A(X,Y)&=\alpha X+Y \\
B(X,Y)&=\tilde{T}(\alpha X+Y)+\tilde{V}^{2^k}X^{2^{2k}}+\tilde{T}^{2^k}Y^{2^{2k}} \ .
\end{align*}
The points at infinity of $\mathcal{X}_1$ and $\mathcal{X}_2$ defined by $A(X,Y)$ and $B(X,Y)$ are $P_1=(1: \alpha:0)$ and $P_2=(1:\eta:0)$, respectively, where $\eta^{2^k}=\tilde{V}/\tilde{T}$. 
Note that $P_1=P_2$ if  and only if $\alpha =\eta$, which holds if and only if $\alpha^{2^k}= \tilde{V}/\tilde{T}$, which is excluded. Hence $\mathcal{X}_1$ and $\mathcal{X}_2$ have distinct points at infinity, therefore they do not have
a common component. By Proposition \ref{pro:zero} the number of solutions of $A(X,Y)=B(X,Y)=0$ is at most $2^2$, thus $F_{\lambda,\mu}$ is bent or semibent. In fact, the intersection multiplicity at $(0,0)$ is greater than $1$ as the curves 
have the same tangent line at $(0,0)$, namely $\alpha X+Y=0$. Consequently, $F_{0,\mu}$ is bent.
\end{itemize}

\noindent Now we consider the case $\lambda \neq 0$. Similarly, we have $P_1=(1: \alpha:0)$ and $P_2=(1:\eta:0)$, where $\eta^{2^k}=\tilde{V}/\tilde{T}$. Then $P_1=P_2$ implies that $\alpha^{2^k}= \tilde{V}/\tilde{T}$, i.e., we have 
\begin{align*}
A(X,Y)&= \lambda^{2^j}(\alpha^{2^k} X^{2^k}+Y^{2^k})= \lambda^{2^j}(\alpha X+ Y)^{2^k} \\
B(X,Y)&=\tilde{T}(\alpha X+Y)+\lambda^{2^j} X^{2^{j}}+\tilde{T}^{2^k}(\alpha X+Y)^{2^{2k}} \ .
\end{align*}
Then $A(X,Y)=B(X,Y)=0$ if and only if $\alpha X+Y= \lambda^{2^j} X^{2^{j}}=0$. This holds if and only if $X=Y=0$, i.e., the corresponding component function is bent. Suppose $P_1\neq P_2$, i.e., $\mathcal{X}_1$ and $\mathcal{X}_2$ 
do not have a common component. Then by Proposition \ref{pro:zero}, and since we have a nontrivial intersection at $(0,0)$ (because $\mathcal{X}_1$ and $\mathcal{X}_2$ have the same tangent line at $(0,0)$, namely $\alpha X+Y$), the number of solutions 
of $A(X,Y)=B(X,Y)=0$ is at most $ 2^2$. Therefore $F_{\lambda,\mu}$ is bent or semibent.

Finally note that if $F$ does not have the classical spectrum, i.e. if $U = \alpha T$, $V=\alpha^{2^k}T$ and $S = \alpha^{2^k+1}T$, with $k = j-i$, then $G(X,1)  = SX^{2^i+2^j} + UX^{2^i} + VX^{2^j} + T$ has the root
$\gamma$ where $\gamma^{2^i}  = \alpha^{-1}$. Hence $F$ is not APN.
\end{proof} 
We now turn our attention to the functions $F$ of the form $(\ref{ZhoPo})$. As pointed out in Corollary 2 in \cite{zp}, if $j$ is even and $\alpha$ is a non-cube,
then $F$ is APN. We need the following proposition to show that otherwise $F$ will never be APN.

\begin{proposition}\label{pro:cube}
Let $m$ be an even integer and $k$ be an integer with $\gcd(m,k) = 1$. Then the curve $\mathcal{X}$ defined by the equation
$X^3-\alpha (T^{2^k}+T) $ always has a solution $(x_0,t_0)\in \F_{2^m} \times \F_{2^m}$ for any nonzero $\alpha \in \F_{2^m} $. 
\end{proposition}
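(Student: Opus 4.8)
The plan is to prove the substantive content of the statement, namely that $\mathcal{X}$ carries a point $(x_0,t_0)$ with $x_0\neq 0$, equivalently with $t_0^{2^k}+t_0\neq 0$. Note that $(0,0)$ always lies on $\mathcal{X}$, so this trivial point makes the literal assertion true for free; but it is useless for the intended application to the non-APN-ness of $(\ref{ZhoPo})$, which needs a point with $t_0^{2^k}+t_0\neq 0$. So I would read the proposition as providing such a nontrivial point. The first step is to rewrite the defining equation additively. Since $\gcd(k,m)=1$, the $2^k$-power Frobenius generates $\Gal(\F_{2^m}/\F_2)$, so the additive map $t\mapsto t^{2^k}+t$ has kernel $\F_2$ and image exactly the trace-zero hyperplane $H=\{z\in\F_{2^m}:\Trm(z)=0\}$, where $\Trm=\mathrm{Tr}_{\F_{2^m}/\F_2}$. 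Hence $\mathcal{X}$ has a point with $t_0^{2^k}+t_0\neq 0$ if and only if some nonzero cube $x_0^3$ satisfies $\alpha^{-1}x_0^3\in H$, i.e. $\Trm(\alpha^{-1}x_0^3)=0$; each such $x_0$ then lifts to two admissible $t_0$. Thus the task reduces to showing the linear form $z\mapsto\Trm(\alpha^{-1}z)$ vanishes on some nonzero cube.

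Next I would count $N:=\#\{x\in\F_{2^m}:\Trm(\alpha^{-1}x^3)=0\}$ by the standard device
\[
N=\sum_{x\in\F_{2^m}}\frac{1+\psi(\alpha^{-1}x^3)}{2}=2^{m-1}+\frac{1}{2}\sum_{x\in\F_{2^m}}\psi(\alpha^{-1}x^3),\qquad \psi(z):=(-1)^{\Trm(z)} .
\]
The remaining character sum is precisely the defect in the point count of the Artin--Schreier curve $\mathcal{E}:Y^2+Y=\alpha^{-1}X^3$: for each $x$ the fibre over it has $1+\psi(\alpha^{-1}x^3)$ points, so $\#\mathcal{E}(\F_{2^m})=2^m+1+\sum_{x}\psi(\alpha^{-1}x^3)$, the extra $+1$ coming from the unique point at infinity.

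I would then invoke Hasse--Weil. The curve $\mathcal{E}$ is absolutely irreducible and nonsingular (in the affine chart $\partial/\partial Y\equiv 1$), an Artin--Schreier cover of $\P^1$ with $\deg=3$ coprime to $2$, hence of genus $g=\tfrac{(2-1)(3-1)}{2}=1$. Therefore $|\#\mathcal{E}(\F_{2^m})-(2^m+1)|\le 2g\,2^{m/2}=2\cdot 2^{m/2}$, which gives $\bigl|\sum_x\psi(\alpha^{-1}x^3)\bigr|\le 2\cdot 2^{m/2}$ and so $N\ge 2^{m-1}-2^{m/2}$. For even $m\ge 4$ this is at least $4>1$; since $x=0$ accounts for exactly one of the counted values, there remains some $x_0\neq 0$ with $\Trm(\alpha^{-1}x_0^3)=0$, producing the desired nontrivial point of $\mathcal{X}$.

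The genuinely hard input is beating the trivial term: one must exclude the possibility that $x=0$ is the only $x$ with $\Trm(\alpha^{-1}x^3)=0$, and no elementary count achieves this, because the nonzero cubes (a subgroup of index $3$ of size $(2^m-1)/3$) and the nonzero trace-zero elements (of size $2^{m-1}-1$) together number strictly fewer than $2^m-1$. Equivalently, for a non-cube $\alpha$ one is asking that a nontrivial cube-coset meet the hyperplane $H$, which forces the Gauss/Weil (Hasse--Weil) estimate rather than pigeonhole. The only other delicate point is small $m$: the bound $2^{m-1}-2^{m/2}>1$ fails at $m=2$, where the nontrivial assertion indeed breaks down for non-cube $\alpha$ (there $H\setminus\{0\}=\{1\}$), consistent with $(\ref{ZhoPo})$ being APN in that dimension. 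I would therefore state the proposition for even $m\ge 4$, or dispatch $m=2$ by direct inspection.
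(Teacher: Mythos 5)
Your proof is correct, and it takes a genuinely different route from the paper's. The paper applies the Hasse--Weil bound directly to the Kummer cover $x^3=\alpha(t^{2^k}+t)$ of $\F_{2^m}(t)$, computing its genus $2^k-1$ via the Hurwitz formula; since that genus grows with $k$, the paper needs the case split $k<m/2$ versus $k>m/2$ (reducing the latter to the former by a substitution) to keep the bound positive. You instead exploit the additive structure first: as $\gcd(k,m)=1$, the image of $t\mapsto t^{2^k}+t$ is exactly the trace-zero hyperplane, so the question becomes whether some nonzero cube $x_0^3$ satisfies $\Trm(\alpha^{-1}x_0^3)=0$, which you settle with the single genus-one Artin--Schreier curve $Y^2+Y=\alpha^{-1}X^3$. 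What this buys is uniformity in $k$ (no case split, no genus depending on $k$) and the explicit count $N\ge 2^{m-1}-2^{m/2}$, which comfortably beats the trivial solutions. Your preliminary discussion also pinpoints a real weakness in the paper's treatment: as literally stated the proposition is trivially true, since $(0,0)$ and $(0,1)$ always lie on $\mathcal{X}$, and the paper's proof concludes only $N(\mathcal{X})>0$, which is exactly that trivial statement; what Corollary \ref{cor:S} actually uses is a point with $t_0^{2^k}+t_0\ne 0$, i.e.\ $x_0\ne 0$. (The paper's own bound would yield such a point for even $m\ge 4$, since it exceeds $2$, but this step is never drawn.) Your $m=2$ caveat is likewise a genuine catch: there $k=1=m/2$ is coprime to $m$ --- contradicting the paper's parenthetical claim that $\gcd(m,k)=1$ forces $k\ne m/2$ --- and the nontrivial assertion really fails for non-cube $\alpha$, so Corollary \ref{cor:S} and the ``only if'' direction of Proposition \ref{ZPAPN} implicitly require $m\ge 4$ (for $m=2$, $j$ odd and $\alpha$ a non-cube, the function $(\ref{ZhoPo})$ is APN). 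Your formulation --- even $m\ge 4$, with $m=2$ handled by inspection --- is the correct one.
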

\begin{proof}
We first investigate the case $k < m/2$.  Consider the rational function field $\F_{2^m}(t)$. It is a well-known fact that there is a one-to-one correspondence between the places of $\F_{2^m}(t)$ 
and the irreducible polynomials over $\F_{2^m} $, except for the place at infinity. We consider the extension $F=\F_{2^m}(t,x)$ of $\F_{2^m}(t)$ given by $x^3=\alpha (t^{2^k}+t) $. Note that 
$F/ \F_{2^m}(t)$ is a Kummer extension as $\F_{2^m}$ contains a $3$-rd root of unity, see \cite[Proposition 3.7.3]{sti}. The ramified places of $\F_{2^m}(t)$ are the pole of $t$ and the places 
corresponding the factors of $t^{2^k}+t$, which are totally ramified. Therefore, the degree 
$\mathrm{deg}\left( \mathrm{Diff}\left( F/ \F_{2^m}(t) \right)\right)$ of the Different divisor of $F/ \F_{2^m}(t)$ is $2({2^k}+1)$. Then by Hurwitz genus formula \cite[Theorem 3.4.13]{sti} the genus 
$g(F)$ of $F$ is given by 
\begin{align*}
2g(F)-2=3(-2)+\mathrm{deg}\left( \mathrm{Diff}\left( F/ \F_{2^m}(t) \right)\right)=-6+2({2^k}+1) \ ,
\end{align*}
i.e., $F$ is a function field of genus $g(F)={2^k}-1$. Note that $F$ is a function field with full constant field $\F_{2^m}$ since there is a totally ramified place of $\F_{2^m}(t)$ in the extension $F/ \F_{2^m}(t)$. The Hasse-Well bound \cite[Theorem 5.2.3]{sti} 
then implies that the number $N(F)$ of rational places of $F$ satisfies
\begin{align*}
N(F)\geq 2^m+1-2({2^k}-1)2^{m/2} \ .
\end{align*}
As is well-known, each non-singular point of the curve $\mathcal{X}$ defined by  $X^3=\alpha (T^{2^k}+T) $ corresponds to a unique rational place. Note that $\mathcal{X}$ has no affine singular point 
and there is a unique rational place corresponding to the point at infinity, namely the unique place of $F$ lying over the place of $\F_{2^m}(t)$ at infinity. As a result, the number of affine points 
$N(\mathcal{X})$ of $\mathcal{X}$ satisfies 
\begin{align}\label{eq:number}
N(\mathcal{X})= N(F)-1\geq  2^{m/2} \left( 2^{m/2} -2({2^k}-1)\right) \ .
\end{align}
Then Equation \eqref{eq:number} implies that $N(\mathcal{X})>0$ as $k< m/2$. Hence there exists $(x_0,t_0)$ such that  $x_0^3=\alpha (t_0^{2^k}+t_0)$. Note that $k \neq m/2$ as $\gcd(m,k) = 1$. 
For $k>m/2$, we set $\ell:=m-k <m/2$ and replace $T$ by $T^\ell$, obtaining $X^3=\alpha (T^{2^\ell}+T)$.
\end{proof}
\begin{corollary}\label{cor:S}
Let $m$ be an even integer, and $k$ be an integer with $\gcd(m,k) = 1$. If $j\in \N$ is odd, then $S = \{ a^{2^k+1}(t^{2^k}+t)^{1-2^j}\,: a,t\in\F_{2^m} \}=\F_{2^m}$.
\end{corollary}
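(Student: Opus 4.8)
The plan is to peel off the two multiplicative constraints hidden in the definition of $S$ one at a time and to feed what remains into Proposition \ref{pro:cube}. Since $a=0$ already gives $0\in S$, fix $\alpha\in\F_{2^m}^*$; proving $\alpha\in S$ amounts to producing $a\neq 0$ and $t$ with $w:=t^{2^k}+t\neq 0$ such that $a^{2^k+1}w^{1-2^j}=\alpha$, equivalently $a^{2^k+1}=\alpha\,w^{2^j-1}$. Once $t$ is chosen the right-hand side is a fixed element, so the task is to arrange that it lies in the image of the power map $a\mapsto a^{2^k+1}$.

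First I would pin down that image. Because $m$ is even and $\gcd(k,m)=1$, the exponent $k$ is odd, so $3\mid 2^k+1$ and $3\mid 2^m-1$; on the other hand $\gcd(2k,m)=2$, whence $\gcd(2^k+1,2^m-1)$ divides $\gcd(2^{2k}-1,2^m-1)=2^{\gcd(2k,m)}-1=3$. Thus $\gcd(2^k+1,2^m-1)=3$, and since $\F_{2^m}^*$ is cyclic the image of $a\mapsto a^{2^k+1}$ is its unique subgroup of index $3$, namely the group of nonzero cubes. Hence $a^{2^k+1}=\alpha w^{2^j-1}$ is solvable in $a\neq 0$ precisely when $\alpha w^{2^j-1}$ is a nonzero cube.

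Next comes the cube-coset bookkeeping modulo $3$. Writing $[z]\in\Z/3$ for the cube class of $z\in\F_{2^m}^*$, one has $[\alpha w^{2^j-1}]=[\alpha]+(2^j-1)[w]$; as $j$ is odd, $2^j\equiv 2\pmod 3$, so $2^j-1\equiv 1\pmod 3$ and therefore $[\alpha w^{2^j-1}]=[\alpha]+[w]=[\alpha w]$. Consequently $\alpha w^{2^j-1}$ is a cube if and only if $\alpha w$ is a cube, i.e. if and only if $x^3=\alpha(t^{2^k}+t)$ is solvable for $x$. So the whole statement reduces to exhibiting one $t$ with $t^{2^k}+t\neq 0$ for which $\alpha(t^{2^k}+t)$ is a cube --- that is, a point of the curve of Proposition \ref{pro:cube} with $x\neq 0$.

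The main obstacle is exactly this nontriviality. The curve $X^3=\alpha(T^{2^k}+T)$ always carries the two useless points $(0,0)$ and $(0,1)$; here $\{0,1\}=\F_2$ is precisely the kernel of the $\F_2$-linear map $t\mapsto t^{2^k}+t$, so these are its only points with $w=0$. I would therefore invoke not the bare existence assertion but the Hasse--Weil count inside the proof of Proposition \ref{pro:cube}, which yields at least $2^{m/2}\bigl(2^{m/2}-2(2^k-1)\bigr)$ affine points; after the reduction to $k<m/2$ this is at least $2^{m/2+1}$, exceeding $2$ for every even $m\ge 4$. Hence a point with $x_0\neq 0$, and so $w=t_0^{2^k}+t_0\neq 0$, must exist. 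Taking this $w$ and a cube root of $\alpha w^{2^j-1}$ recovers the required $a$, whence $\alpha\in S$; combined with $0\in S$ this gives $S=\F_{2^m}$.
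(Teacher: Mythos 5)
Your proof is correct and follows essentially the same route as the paper's: both arguments reduce the claim, via cube-coset arithmetic modulo $3$ (using $2^j-1\equiv 1\pmod 3$ for odd $j$) together with the fact that the image of $a\mapsto a^{2^k+1}$ is exactly the group of nonzero cubes, to producing a suitable point on the curve $X^3=\alpha(T^{2^k}+T)$ of Proposition \ref{pro:cube}. If anything, your write-up is more careful than the paper's: you observe that the statement of Proposition \ref{pro:cube} alone is already satisfied by the useless points $(0,0)$ and $(0,1)$, so you return to the Hasse--Weil count inside its proof to guarantee a point with $x_0\neq 0$ (hence $t_0^{2^k}+t_0\neq 0$) --- a nontriviality issue that the paper's proof passes over silently.
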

\begin{proof}
Let $H=\{x^3\,: x\in\F_{2^m} \setminus \lbrace 0\rbrace \}$ the set of cubes, a proper subgroup of the multiplicative group of $\F_{2^m}$. Since $j$ is odd, hence $\gcd(2^j-1,3) = 1$,
by Proposition \ref{pro:cube}, the set $ \{(t^{2^k}+t)^{1-2^j}\,: t\in\F_{2^m} \}$ contains an element $(t_0^{2^k}+t_0)^{1-2^j}$ from any fixed coset $C$ of the subgroup of cubes $H$. If $a$ 
runs through $\F_{2^m} $, hence $a^{2^k+1}$ runs though $H$,  $ a^{2^k+1}(t_0^{2^k}+t_0)^{1-2^j}$ runs through 
the coset $C$. Therefore $S$ contains all elements of $\F_{2^m}$.
\end{proof}
\begin{proposition}
\label{ZPAPN}
For $\alpha \in \F_{2^m}^*$, $m$ even,  and integers $j,k$ with $\gcd(m,k) = 1$, let $F:\F_{2^m}\times\F_{2^m}\rightarrow\F_{2^m}\times\F_{2^m}$ be defined as
\begin{equation*}
F(X,Y) = (XY,G(X,Y)), \quad G(X,Y) = X^{2^k+1} + \alpha Y^ {(2^k+1)2^j}, 
\end{equation*}
Then $F$ is APN if and only if $j$ is even, and $\alpha$ is a non-cube in $\F_{2^m}$
\end{proposition}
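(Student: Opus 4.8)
The plan is to reduce everything to the Zhou--Pott criterion recalled above, namely that $F$ is APN if and only if $\alpha\notin S$, where
\[ S=\{\,a^{2^k+1}(t^{2^k}+t)^{1-2^j}\,:\,a,t\in\F_{2^m}\,\}. \]
Since $\alpha\ne 0$, only the nonzero part $S\setminus\{0\}$ matters, so the first thing I would do is describe $S\setminus\{0\}$ multiplicatively. As $a$ ranges over $\F_{2^m}^*$, the factor $a^{2^k+1}$ runs exactly over the subgroup of nonzero cubes: indeed $\gcd(k,m)=1$ gives $\gcd(2^k-1,2^m-1)=1$, so $\gcd(2^k+1,2^m-1)=\gcd(2^{2k}-1,2^m-1)=2^{\gcd(2k,m)}-1=2^2-1=3$, where I use that $m$ even together with $\gcd(k,m)=1$ forces $k$ odd and hence $\gcd(2k,m)=2$. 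Thus the $(2^k+1)$-th powers form the unique index-$3$ subgroup $H$ of $\F_{2^m}^*$, which is precisely the set of nonzero cubes because $3\mid 2^m-1$.

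With this in hand the two directions split according to the parity of $j$. If $j$ is odd, then Corollary \ref{cor:S} already gives $S=\F_{2^m}$, so $\alpha\in S$ and $F$ is never APN; in particular APN-ness forces $j$ even. If $j$ is even, then $2^j\equiv 1\pmod 3$, so $1-2^j\equiv 0\pmod 3$ and the second factor $(t^{2^k}+t)^{1-2^j}$ is itself a cube whenever $t^{2^k}+t\ne 0$. Hence every nonzero element of $S$ is a product of two cubes and so lies in $H$; conversely, fixing any $t$ with $t^{2^k}+t\ne 0$ and letting $a$ range over $\F_{2^m}^*$ sweeps the whole coset $H\cdot(t^{2^k}+t)^{1-2^j}=H$. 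Therefore $S\setminus\{0\}=H$, the set of nonzero cubes, and $\alpha\in S$ if and only if $\alpha$ is a cube. Combining the two cases, $\alpha\notin S$ holds precisely when $j$ is even and $\alpha$ is a non-cube, which by the Zhou--Pott criterion is exactly the claim. Note that this also recovers the sufficiency stated in \cite{zp} without a separate argument.

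The only genuinely delicate points are bookkeeping ones, which I would isolate at the start: the computation $\gcd(2^k+1,2^m-1)=3$ and the resulting identification of the $(2^k+1)$-th powers with the cubes, together with the harmless degenerate cases $a=0$ (giving $0\in S$, irrelevant since $\alpha\ne 0$) and $t\in\F_2$ (for which $t^{2^k}+t=0$ and the exponent $1-2^j$ is undefined, so these $t$ are simply discarded). The substantive analytic input needed for the odd-$j$ case is the Hasse--Weil estimate already packaged in Proposition \ref{pro:cube} and Corollary \ref{cor:S}; once that is invoked, the even-$j$ case is purely the elementary cube-counting above, so I do not expect any real obstacle beyond organising these pieces cleanly.
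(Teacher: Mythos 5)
Your proof is correct and follows essentially the same route as the paper's: both reduce to the Zhou--Pott criterion $\alpha\notin S$, dispose of odd $j$ via Corollary \ref{cor:S} (i.e.\ the Hasse--Weil input of Proposition \ref{pro:cube}), and identify $S\setminus\{0\}$ with the nonzero cubes when $j$ is even. The only difference is that you spell out the cube identification (via $\gcd(2^k+1,2^m-1)=3$ and $3\mid 1-2^j$), a step the paper dismisses with ``clearly''.
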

\begin{proof}
As shown in \cite[Theorem 7]{zp}, $F$ is APN if and only if $\alpha$ is not contained in the set $S = \{ a^{2^k+1}(t^{2^k}+t)^{1-2^j}\,: a,t\in\F_{2^m} \}$. Clearly, if $j$ is even, then $S$
contains exactly all the cubes of $\F_{2^m}$ Hence if $j$ is even, then $F$ is APN if and only if $\alpha$ is a non-cube. If $j$ is odd, then $S=\F_{2^m}$ by Corollary \ref{cor:S}. Hence $F$ is not APN. 
\end{proof}

As shown in Theorem 2.1 in \cite{tqlt}, all APN-functions of the form $(\ref{ZhoPo})$ with $j$ even and $\alpha$ a non-cube, therefore by Proposition \ref{ZPAPN} all of the Zhou-Pott APN-functions,
have the classical spectrum. We close this section with a short proof for Theorem 2.1 in \cite{tqlt}. More general, we exactly describe all functions $F$ of the form $(\ref{ZhoPo})$ with classical spectrum,
including the case when $m$ is odd and determine the nonlinearity of the remaining functions. We note that when $m$ is odd then $F$ cannot be APN, which can easily be seen from the original proof in \cite{zp} or with Carlet's criterion. 
\begin{corollary}
For $\alpha \in \F_{2^m}^*$  and integers $j,k$ with $\gcd(m,k) = 1$, let $F:\F_{2^m}\times\F_{2^m}\rightarrow\F_{2^m}\times\F_{2^m}$ be defined as
\begin{equation*}
F(X,Y) = (XY,G(X,Y)), \quad G(X,Y) = X^{2^k+1} + \alpha Y^ {(2^k+1)2^j}.
\end{equation*}
Then $F$ has only bent and semibent components if and only if $m$ is odd, or $m$ is even and $\alpha$ is a non-cube. 
In particular, if $F$ is APN, then $F$ has the classical spectrum.
If $m$ is even and $\alpha$ is a cube, then the nonlinearity of $F$ is $NL(F) =2^{2m-1} - 2^{m+1}$.
\end{corollary}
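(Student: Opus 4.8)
The plan is to run the same machinery as in Theorem \ref{TThm} and Corollary \ref{C11Spec}. For a component $F_{\lambda,\mu}(X,Y)={\rm Tr}_m(\lambda XY+\mu G(X,Y))$ with $(\lambda,\mu)\neq(0,0)$, I would pass to the linear part of the derivative and read off the two polynomials of the shape $(\ref{AandB})$; a short computation (clearing the Frobenius power $2^{j}$ from the $Y$-part) gives
\begin{align*}
A(X,Y)&=\mu Y+\lambda^{2^k}X^{2^k}+\mu^{2^k}Y^{2^{2k}},\\
B(X,Y)&=\mu^{2^{-j}}\alpha^{2^{-j}}X+\lambda^{2^k}Y^{2^k}+\mu^{2^{k-j}}\alpha^{2^{k-j}}X^{2^{2k}},
\end{align*}
both of the form $(\ref{CDXY})$ with $kd_1=kd_2=2k$, so Corollary \ref{BezNL} immediately yields $s\le 4$; the case $\mu=0$ forces $A=B=0$ only at the origin, so those components are bent. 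The decisive structural difference from Taniguchi's function is that $G$ carries \emph{no mixed monomial}: hence $\partial A/\partial X\equiv 0$ and $\partial B/\partial Y\equiv 0$, so $\mathcal{X}_1=\{A=0\}$ has only horizontal tangents and $\mathcal{X}_2=\{B=0\}$ only vertical ones. Every common affine point is therefore transversal, the Bezout--tangency trick of Theorem \ref{TThm} gives nothing here, and the split between the classical spectrum and a $4$-plateaued component must instead be governed by how many of the $2^{4k}$ (transversal, hence $\F_{2^{km}}$-rational) intersection points actually lie in $\F_{2^m}$.

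To count these I would eliminate $X$. For $\lambda\neq 0$, $A=0$ expresses $X$ as a linearized function of $Y$, and substituting into $B=0$ yields a single $2^{2k}$-linearized polynomial $L(Y)=rY^{2^{4k}}+qY^{2^{2k}}+Y$, with $r$ a monomial and $q$ a sum of monomials in $\alpha,\lambda,\mu$, whose number of $\F_{2^m}$-roots equals $|\mathcal{Z}_{\F_{2^m}}(A,B)|=2^{s}$; the case $\lambda=0$ decouples into two separate equations handled directly. The arithmetic of $\gcd(2k,m)$ now decides everything. If $m$ is \emph{odd} then $\gcd(2k,m)=1$ and Lemma \ref{lem:zero} applied to $L$ with step $2k$ gives at most $2^{2}=4$ roots, so $s\le 2$ and all components are bent or semibent. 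If $m$ is \emph{even} then $\gcd(2k,m)=2$, $\F_4\subseteq\F_{2^m}$, and since $c^{2^{2k}}=c$ for $c\in\F_4$ the $\F_{2^m}$-roots of $L$ form an $\F_4$-subspace; thus $s\in\{0,2,4\}$ and $s=4$ occurs exactly when $L$ attains the maximal number $16$ of roots in $\F_{2^m}$.

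For $m$ even I would first settle existence: if $\alpha$ is a cube, choose $\lambda=0$ and $\mu$ a nonzero cube. Using that $k$ is odd (so $2^{k}\equiv 2$ and $2^{2k}\equiv 1 \pmod 3$), both decoupled equations then gain $\gcd(2^{2k}-1,2^m-1)=3$ nonzero solutions, whence $|\mathcal{Z}_{\F_{2^m}}(A,B)|=16$ and this component is $4$-plateaued. Combined with the universal bound $s\le4$, this shows that for $m$ even with $\alpha$ a cube the largest Walsh value is $2^{(n+4)/2}=2^{m+2}$, so $NL(F)=2^{2m-1}-2^{m+1}$.

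The main obstacle is the converse for $m$ even: \emph{if $\alpha$ is a non-cube then no component is $4$-plateaued}. Here I would analyse $L$ via its factorization in the twisted (Ore) ring $\F_{2^m}[F]$, $Fa=a^{2^{2k}}F$: writing $L=(rF+\gamma^{-1})(F+\gamma)$ forces $\gamma$ to be a root of $E(t)=rt^{2^{2k}+1}+qt+1$, and the two kernels $\{Y:Y^{2^{2k}-1}=\gamma\}$, $\{Z:Z^{2^{2k}-1}=(r\gamma)^{-1}\}$ are nontrivial precisely when $\gamma$ and $r\gamma$ are cubes, since the $(2^{2k}-1)$-power map has image the cubes. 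Hence $s=4$ forces $\gamma$ and $r\gamma$, and therefore $r$, to be cubes; reducing the explicit exponents of the monomial $r$ modulo $3$ then pins the cube class of $r$ to that of $\alpha$ (for $j$ even one gets $r\equiv\alpha^{2}$ modulo cubes, giving $\alpha$ a cube at once). The delicate part I expect to fight with is carrying this cube bookkeeping \emph{uniformly in $j$ and $(\lambda,\mu)$}, i.e.\ verifying that the two cube conditions cannot hold simultaneously for any admissible parameters when $\alpha\notin(\F_{2^m}^*)^3$; for this I would either sharpen the twisted-factorization criterion to a condition that is necessary \emph{and} sufficient, or count the cube solutions of $E$ (which must number $0$, $1$ or $5$) through a norm/$\F_4$-rationality argument in the spirit of Proposition \ref{pro:cube}. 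Granting this, the spectrum is classical exactly when $m$ is odd or $\alpha$ is a non-cube; and since an APN function in this family requires $m$ even and $\alpha$ a non-cube by Proposition \ref{ZPAPN}, it automatically has the classical spectrum.
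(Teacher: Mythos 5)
Your proposal has a genuine, self-acknowledged gap, and it sits at exactly the point that carries the theorem: for $m$ even and $\lambda\mu\neq 0$ you must show that your eliminant $L$ cannot have $16$ roots in $\F_{2^m}$ when $\alpha$ is a non-cube, and the Ore-factorization/cube-bookkeeping argument is only announced (``Granting this\dots''), not carried out. Everything else you do is sound and essentially agrees with the paper: the $\mu=0$ components are bent; the $\lambda=0$ system decouples and gives $4$ solutions for $m$ odd, respectively $1$, $4$ or $16$ solutions for $m$ even according to the cube classes of $\mu$ and $\mu\alpha$, which produces the $4$-plateaued components and the value $NL(F)=2^{2m-1}-2^{m+1}$ when $\alpha$ is a cube; the universal bound $s\le 4$ comes from Proposition~\ref{pro:zero}; and your $m$-odd treatment of $\lambda\mu\neq 0$ (eliminate $X$, then apply Lemma~\ref{lem:zero} with step $2k$, using $\gcd(2k,m)=1$) is correct and complete. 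But with the $m$-even, non-cube, $\lambda\mu\neq 0$ case open, the ``if'' direction of the equivalence is not proved.

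You should know, however, that you cannot close this gap by falling back on the paper, because your transversality observation is correct and it undercuts the paper's own argument. The paper settles all $\lambda\mu\neq 0$ components, for every $m$ and every $\alpha$, by repeating the auxiliary-curve trick of Theorem~\ref{TThm} with $h_1=AB$, $h_2=YA+B$ and concluding that $\mathcal{X}_1$, $\mathcal{X}_2$ meet in fewer than $2^{4k}$ points. That conclusion is impossible: $A$ and $B$ have constant nonzero gradients ($(0,\mu^{2^i})$ and $(\mu\alpha,0)$ in the paper's normalization), so both curves are smooth with horizontal resp.\ vertical tangent lines at every affine point, they share no point at infinity, hence every common point is affine of intersection multiplicity exactly $1$, and Bezout forces exactly $2^{4k}$ distinct common points over $\bar{\F}_2$. (The paper's values $m_{P_1}(\mathcal{Y}_1)=2^{2k}$ and $m_{P_1}(\mathcal{Y}_2)=2^{2k}+1$ are in fact the intersection numbers of $\mathcal{Y}_1,\mathcal{Y}_2$ with the line at infinity; the point multiplicities are $2^{2k}-2^k$ and $1$, and no contradiction with Bezout arises.) Indeed the paper's intermediate claim is false, not merely unproved: take $m=6$, $k=1$, $\lambda=\mu=\alpha=1$ (any $j$); the system becomes $Y+X^2+Y^4=X+Y^2+X^4=0$, whose solutions in $\F_{64}\times\F_{64}$ are the $16$ pairs $(x,\,x^2+x^{32})$ with $x^{16}+x^4+x=0$, all of them $\F_{64}$-rational, so this component with $\lambda\neq 0$ is $4$-plateaued. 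This is consistent with the corollary ($\alpha=1$ is a cube) but shows that the $\lambda\mu\neq 0$ case genuinely depends on the cube class of $\alpha$, which the paper's argument never uses; so the arithmetic argument you sketch (or one along the lines of \cite{tqlt}) really is needed. Two smaller corrections: the same transversality holds for Taniguchi's $A,B$ in Theorem~\ref{TThm}, whose gradients are also constant, so this is not a structural difference caused by the missing mixed monomial; and transversality does not by itself make the $2^{4k}$ intersection points $\F_{2^{km}}$-rational, as your parenthetical claims.
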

\begin{proof}
For the Zhou-Pott function, we obtain the equations
\begin{align}
\label{eq:lambda}
A(X,Y) & = \mu^{2^i}Y + \lambda^{2^k}X^{2^k} + \mu^{2^{k+i}}Y^{2^{2k}} , \\ \nonumber
B(X,Y) & = \mu \alpha X + \lambda^{2^k}Y^{2^k} + \mu^{2^k}\alpha^{2^k}X^{2^{2k}}.
\end{align}
If $\mu=0$, then $A(X,Y)=B(X,Y)=0$ if and only if
$X=Y=0$, i.e., the corresponding component function is bent. Now suppose that $\mu \neq 0$. If $\lambda \neq 0$, then the points at infinity of $\mathcal{X}_1$ and $\mathcal{X}_2$, defined by $A(X,Y)$ and 
$B(X,Y)$, respectively, are $P_1=(1:0:0)$ and $P_2=(0:1:0)$. Hence $P_1\neq P_2$, and $\mathcal{X}_1$ and $\mathcal{X}_2$ do not have a common component. Considering the curves $\mathcal{Y}_1$ and 
$\mathcal{Y}_2$ defined by the equations 
\begin{align*}
h_1(X,Y)=A(X,Y)B(X,Y) \quad \text{and} \quad h_2(X,Y)=YA(X,Y) \ ,
\end{align*}
respectively, as in the proof of Theorem \ref{TThm} we can show that the number of intersection points of $\mathcal{X}_1$ and $\mathcal{X}_2$ is less than $2^{4k}$. Hence the number of solutions of 
$A(X,Y)=B(X,Y)=0$ in $\mathbb{F}_{2^m} \times \mathbb{F}_{2^m} $ is less than $2^4$, which gives the desired conclusion.
 
\noindent For $\lambda=0$, Equations $(\ref{eq:lambda})$ reduce to
\begin{align}\label{eq:lambda0}
A(X,Y) & = \mu^{2^i}Y + \mu^{2^{k+i}}Y^{2^{2k}} = 0, \\ \nonumber
B(X,Y) & = \mu \alpha X + \mu^{2^k}\alpha^{2^k}X^{2^{2k}} = 0.
\end{align}
or equivalently
\begin{align}
\label{16}
X = 0\; \mbox{or}\; X^{2^k+1}= (\mu \alpha)^{-1} \quad \text{and} \quad  Y = 0\; \mbox{or}\; Y^{2^k+1}=\mu^{-2^i}.
\end{align} 
If $m$ is odd, hence $(2^k+1,2^m-1) = 1$, then we have $2$ solutions for $X$ and $Y$, hence $4$ solutions $(x,y)\in \mathbb{F}_{2^m} \times \mathbb{F}_{2^m} $, i.e. $F_{0,\mu}$ is semibent.
If $m$ is even,  hence $(2^k+1,2^m-1) = 3$, we have $4$ solutions or one solution for $X$ respectively $Y$, depending on whether $\mu\alpha$ respectively $\mu$ is a cube or not.
If $\alpha$ is a non-cube, then at most one of $\mu\alpha$ and $\mu$ is a cube. Hence we have one or four solution pairs $(x,y)\in \mathbb{F}_{2^m} \times \mathbb{F}_{2^m} $, and  $F_{0,\mu}$ is bent or semibent.
If $\alpha$ is a cube, then we have $16$ solutions $(x,y)\in \mathbb{F}_{2^m} \times \mathbb{F}_{2^m} $ whenever $\mu$ is a cube, hence $F_{0,\mu}$ is $4$-plateaued.
\end{proof} 
%

\vspace{.5cm}

 We close this section pointing out that our approach is also applicable to the butterfly functions investigated in \cite{cdp}. For an odd integer $m$, let $F:\F_{2^m}\times\F_{2^m}\rightarrow \F_{2^m}\times\F_{2^m}$ be defined as
\begin{equation}
\label{butter}
F(X,Y) = (R(X,Y),R(Y,X))\;\mbox{with}\; R(X,Y) = (X+\alpha Y)^3 + \beta Y^3,\;\alpha,\beta\in\F_{2^m}^*.
\end{equation}
This quadratic function $F$ belongs to the {\it closed butterfly} class. Such functions are CCZ-equivalent to permutations called {\it open butterfly}. Most notably, for $m=3$, $\Tr(\alpha) = 0$, $\alpha \ne 0$ and 
$\beta\in\{\alpha^3+\alpha,\alpha^3+\alpha^{-1}\}$ the function $(\ref{butter})$ is CCZ-equivalent to the only known APN-permutation in an even number of variables in \cite{Dillon}. We refer to \cite{cdp} for details, where 
the functions $(\ref{butter})$ were thoroughly investigated. Amongst others it is shown that $F$ is differentially $4$-uniform if and only if $\beta \ne (1+\alpha)^3$, but the above mentioned case is the only 
one for which $F$ is APN. For the Walsh spectrum for functions $F$ defined as in $(\ref{butter})$ the following Theorem is shown.
\begin{theorem}\cite[Theorem 14]{cdp}
	\label{Thm14}
	If $\beta \ne (1+\alpha)^3$, then the Walsh spectrum of $F$ is $W_F = \{0,\pm 2^m,\pm 2^{m+1}\}$, i.e., (since $F$ is quadratic) all components are bent or semibent.
	If $\beta = (1+\alpha)^3$, then the nonlinearity of $F$ is $2^{2m-1} - 2^{(3m-1)/2}$.
\end{theorem}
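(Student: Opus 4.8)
The plan is to run the method of Corollary~\ref{BezNL} on every component of $F$ at once, just as in Theorem~\ref{TThm}. First I would expand $R(X,Y)=(X+\alpha Y)^3+\beta Y^3=X^3+\alpha X^2Y+\alpha^2XY^2+(\alpha^3+\beta)Y^3$, so that for $\lambda,\mu\in\F_{2^m}$ the component $F_{\lambda,\mu}(X,Y)=\Trm(\lambda R(X,Y)+\mu R(Y,X))$ equals $\Trm(c_1X^3+c_2Y^3+c_3X^2Y+c_4XY^2)$ with $c_1=\lambda+\mu(\alpha^3+\beta)$, $c_2=\lambda(\alpha^3+\beta)+\mu$, $c_3=\alpha(\lambda+\mu\alpha)$, $c_4=\alpha(\mu+\lambda\alpha)$. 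Forming the linear part of the derivative and collapsing the degree-one terms onto $X^2,Y^2$ via $\Trm(aX)=\Trm(a^2X^2)$, one is led (after renaming the direction variables) to the linearized polynomials $A=c_1^2X^4+c_4^2Y^4+c_1X+c_3Y$ and $B=c_3^2X^4+c_2^2Y^4+c_4X+c_2Y$, which are of the form \eqref{CDXY} with $k=1$ and $d_1=d_2=2$. Hence Corollary~\ref{BezNL} applies: if the curves $\mathcal{X}_1,\mathcal{X}_2$ defined by $A,B$ have no common component the component is at most $4$-plateaued, and a common point of intersection multiplicity $>1$ lowers this to bent or semibent (recall $s$ must be even as $n=2m$).

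The quartic leading forms are fourth powers of linear forms, $c_1^2X^4+c_4^2Y^4=(c_1^{1/2}X+c_4^{1/2}Y)^4$ and likewise for $B$, so $\mathcal{X}_1,\mathcal{X}_2$ have single points at infinity $P_1=(c_4^{1/2}:c_1^{1/2}:0)$ and $P_2=(c_2^{1/2}:c_3^{1/2}:0)$, and $P_1=P_2$ iff $c_1c_2=c_3c_4$. The key identity I would then establish is
\[
c_1c_2+c_3c_4=\beta(\lambda+\mu)^2+\lambda\mu\big((1+\alpha)^3+\beta\big)^2 .
\]
When $P_1\ne P_2$ the two curves share no component, and I would copy the auxiliary-curve argument of Theorem~\ref{TThm}: with $h_1=AB$ (degree $8$) and $h_2=\ell\,A+B$ (degree $5$), where $\ell$ is the line through $P_1$, every affine common zero of $A,B$ is a singular point of $\mathcal{Y}_1$, so if $\mathcal{X}_1,\mathcal{X}_2$ met in the full $16$ points permitted by Bezout, the intersection number of $\mathcal{Y}_1,\mathcal{Y}_2$ (including the large contribution at $P_1$) would exceed $\deg\mathcal{Y}_1\deg\mathcal{Y}_2=40$; thus there are fewer than $16$ common zeros and $F_{\lambda,\mu}$ is bent or semibent.

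The genuinely new case is $P_1=P_2$, i.e. $c_1c_2=c_3c_4$. Here I would cancel the quartic parts by replacing $A$ by $A'=A+tB$, $t=(c_1/c_3)^2$, leaving an $\F_{2^m}$-linear form $A'=aX+bY$. If $A'\not\equiv0$ and $A'$ does not divide $B$, then $\{A'=B=0\}$ has at most $\deg A'\cdot\deg B=4$ points by Bezout, so $s\le2$ and the component is again bent or semibent. The only way to leave the classical spectrum is that $A,B$ share a component, i.e. $A'\equiv0$ or $A'\mid B$; a short computation shows that this forces the two ``rank-one'' relations $c_3^2=c_1c_4$ and $c_4^2=c_2c_3$, which for $\mu\ne0$ reduce to the linear system $\lambda P+\mu\beta=0$, $\lambda\beta+\mu P=0$ with $P=1+\alpha^4+\alpha\beta$; its determinant $P^2+\beta^2$ vanishes exactly when $\beta=(1+\alpha)^3$, and then $\lambda=\mu$ and in fact $A=B$. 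In that single family one computes directly $R(X,Y)+R(Y,X)=\alpha(1+\alpha)(X+Y)^3$, so $F_{\lambda,\lambda}(X,Y)=\Trm\big(\lambda\alpha(1+\alpha)(X+Y)^3\big)$ depends only on $W=X+Y$ through the Gold function $\Trm(cW^3)$, which is semibent on $\F_{2^m}$ for $m$ odd; its linear space is the $m$-dimensional diagonal $\{(u,u)\}$ enlarged by the single linear structure of the Gold function, so $F_{\lambda,\lambda}$ is exactly $(m+1)$-plateaued. Collecting the cases, if $\beta\ne(1+\alpha)^3$ all components are bent or semibent (the classical spectrum), while if $\beta=(1+\alpha)^3$ the $2^m-1$ components with $\lambda=\mu\ne0$ realise the maximal absolute Walsh value $2^{(3m+1)/2}$, giving $NL(F)=2^{2m-1}-2^{(3m-1)/2}$.

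I expect the main obstacle to be the bookkeeping inside the case $P_1=P_2$ when some of the $c_i$ vanish: then the fourth-power description of the points at infinity and the reduction $A'=A+tB$ degenerate (for instance $\alpha=1,\lambda=\mu$ gives $c_3=c_4=0$ and separates the variables), and one must check in each such configuration that a genuine common component of $A$ and $B$ occurs only in the exceptional family $\beta=(1+\alpha)^3$, $\lambda=\mu$. Pinning down this equivalence precisely, rather than merely running the generic argument, is the crux.
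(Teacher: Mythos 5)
Your setup agrees with the paper's: your $A,B$ are exactly its polynomials (your $c_1,c_2,c_3,c_4$ are its $C_1,C_4,C_2,C_3$), the dichotomy on the points at infinity is the right one, and your treatment of the coincident case $c_1c_2=c_3c_4$ is correct and essentially the paper's. In fact your identity $c_1c_2+c_3c_4=\beta(\lambda+\mu)^2+\lambda\mu\bigl((1+\alpha)^3+\beta\bigr)^2$ is right (the paper's corresponding formula has a typo breaking the $\lambda\leftrightarrow\mu$ symmetry), and recognizing the exceptional components $F_{\lambda,\lambda}$, $\beta=(1+\alpha)^3$, as the Gold function $\Trm\bigl(\lambda\alpha(1+\alpha)(X+Y)^3\bigr)$, hence exactly $(m+1)$-plateaued, is a clean alternative to the paper's direct computation of the solution space. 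The genuine gap is in the case you treat as routine, namely $P_1\ne P_2$.

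There you read $A,B$ as \eqref{CDXY} with $k=1$, $d_1=d_2=2$, so Corollary \ref{BezNL} only gives $s\le 4$, and you propose to exclude $16$ distinct common points by copying the auxiliary-curve argument of Theorem \ref{TThm}. That argument cannot produce the contradiction you want. Since $P_1\notin\mathcal{X}_2$, the homogenization of $h_2=\ell A+B$ is $\ell A^*+ZB^*$ with $B^*(P_1)\ne 0$, so its local expansion at $P_1$ contains the degree-one term $B^*(P_1)\,Z$: the point $P_1$ is a \emph{smooth} point of $\mathcal{Y}_2$, $m_{P_1}(\mathcal{Y}_2)=1$, and there is no ``large contribution at $P_1$''. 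Concretely, using $I(P,F\cap(G+EF))=I(P,F\cap G)$ and additivity of intersection numbers over products, one has for every point $P$
\[
I(P,\mathcal{Y}_1\cap\mathcal{Y}_2)=2\,I(P,A^*\cap B^*)+I(P,A^*\cap Z)+I(P,B^*\cap \ell),
\]
and summing over all $P$ gives exactly $2\cdot 16+4+4=40=\deg\mathcal{Y}_1\cdot\deg\mathcal{Y}_2$. So Bezout for $(\mathcal{Y}_1,\mathcal{Y}_2)$ is an identity, never a contradiction, no matter how the common points of $A,B$ are configured; the contribution at $P_1$ is exactly $I(P_1,A^*\cap Z)=4$, whereas your count needs it to exceed $8$. (The same objection applies to the multiplicities asserted at infinity in the printed proof of Theorem \ref{TThm} — there too $m_{P_1}(\mathcal{Y}_2)=1$, not $2^{2k}+1$ — so you cannot repair the step by citing that proof as a black box.)

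The fix is the one observation you missed, and it is precisely what makes the paper's own proof of Theorem \ref{Thm14} short: $m$ is odd, so $\gcd(2,m)=1$, and the only exponents occurring in $A,B$ are $2^0$ and $2^2$; hence $A,B$ are of the form \eqref{CDXY} with $k=2$ and $d_1=d_2=1$, and Corollary \ref{BezNL} gives $s\le d_1+d_2=2$, i.e.\ bent or semibent, whenever $A$ and $B$ have no common component. No auxiliary curves and no multiplicity refinement are needed; the entire proof reduces to deciding when $A$ and $B$ share a component, i.e.\ to your coincident case together with the degenerate configurations where some $c_i=0$ (which the paper settles directly, e.g.\ $c_1=0$ forces at most four common zeros outright, and which you leave as ``bookkeeping''). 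With that replacement your outline becomes correct — note the true crux is this choice of $k$, not the degenerate bookkeeping you identified as the main obstacle.
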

%
\noindent Applying our Corollary \ref{BezNL} based on Bezout's theorem, we can shorten the proof of Theorem \ref{Thm14}
to some extent.

Straightforwardly we get our conditions $A(X,Y) = B(X,Y) = 0$ for the component function $F_{\lambda , \mu}$ of $F$ given as in $(\ref{butter})$ as
\begin{align*}
A(X,Y) & = C_1X + C_2Y + C_1^2 X^4 + C_3^2Y^4 , \\
B(X,Y) & = C_3X + C_4Y + C_2^2 X^4 + C_4^2Y^4 ,
\end{align*}
where $C_1 = \lambda + \mu d$, $C_2 =  \lambda\alpha + \mu\alpha^2$, $C_3 = \lambda\alpha^2 + \mu\alpha$, $C_4 = \lambda d + \mu$ with $d=\alpha^3+\beta$,
see Equation (1) in \cite{cdp}. 

Observe that in this case if $A$ and $B$ do not have a common component then by Corollary \ref{BezNL}, $F$ is $s$-plateaued with $s \le 2$, i.e. bent or semibent.

First note that we can assume that $C_1, C_2, C_3, C_4$ are all nonzero: If $C_1 = 0$ and $(C_2, C_3)\ne (0,0)$, then we obtain from $A(X,Y)$ at most $2$ solutions for $Y$ and  from $B(X,Y)$ we obtain at most $2$ solutions for $X$ each solution $Y$. That is, we have at most $4$ solutions. The same arguments applies if  $C_4 = 0$. If $C_1=C_2=C_3=0$, then we obtain $\alpha = 1$ and $\mu = \alpha$ from $C_2=C_3 = 0$ (assuming $\alpha\ne 0$),
and then $b=0$ from $C_1=0$, which we exclude (and implies that also $C_4 = 0$). The same argument applies
if $C_4=C_2=C_3=0$. Hence we can assume that $C_1C_4\ne 0$. If now $C_2C_3 = 0$, then the points at infinity of the curves defined by $A$ and $B$ are obviously different and 
$A$ and $B$ do not have a common component. 

For $C_1C_2C_3C_4 \ne 0$, the points at infinity for $A$ and $B$ are $P_1 = (\zeta,1,0)$ and $P_2 = (\eta,1,0)$, respectively, where $\zeta^2 = C_1/C_3$ and 
$\eta^2 = C_2/C_4$. Hence if $C_1C_4 \ne C_2C_3$, then $A$ and $B$ do not have a common component, and it remains to investigate the case that  $C_1C_4 = C_2C_3$.

Consider the equivalent system $A(X,Y) = 0$, $\tilde{B}(X,Y) = 0$ with
\begin{align*} 
\tilde{B}(X,Y) & = C_2^2A + C_1^2B = (C_2^2C_1+C_1^2C_3)X + (C_2^3+C_1^2C_4)Y \\
& = C_1(C_2^2+C_1C_3)X + C_2(C_2^2+C_1C_3)Y.
\end{align*}
If $C_2^2+C_1C_3 \ne 0$, then we can instead consider the equations 
\[ \bar{B}(X,Y) = C_1X + C_2Y, \quad A(X,X) + \bar{B}(X,Y) = (C_1^ {-2}X + C_3^{-2}Y)^4. \]
Since $C_1C_3^{-2} + C_2C_1^ {-2} = 0$ implies $C_1(C_1C_3+C_2^2) = 0$, a contradiction, we have the unique solution $X=Y=0$, hence $F_{\lambda,\mu}$ is bent.  

It remains to investigate the case $C_1C_4 = C_2C_3$ and $C_2^2+C_1C_3 = 0$ (for which $B$ is a constant multiple of $A$). We reproduce here the relevant part of the proof of 
Theorem 14 in \cite{cdp} as follows. We have
\begin{align*}
C_2^2 + C_1C_3 & = \lambda^2\alpha^2 + \mu^2\alpha^4 + (\lambda + \mu(\alpha^3+\beta))(\lambda \alpha^2 + \mu \alpha) \\
& = \alpha\mu(\mu \beta + \lambda(\alpha^4+\alpha\beta+1)).
\end{align*}
Hence, $C_2^2 + C_1C_3 = 0$ if and only if
\begin{equation}
\label{condition}
\frac{\mu}{\lambda} = \beta^{-1}(\alpha^4+\alpha\beta+1).
\end{equation}
Then,
\begin{align*}
C_2C_3+C_1C_4 & = (\lambda \alpha+\mu\alpha^2)(\lambda \alpha^2+\lambda \alpha)+(\lambda+\mu(\alpha^3+\beta))(\lambda(\alpha^3+\beta)+\mu) \\
& = \mu^2(\alpha^3+\beta) + \lambda\mu((\alpha+1)^3+\beta)^2 + \lambda^2\beta.
\end{align*}
Using the condition in $(\ref{condition})$ we get
\begin{align*}
C_2C_3+C_1C_4 & = \frac{\lambda}{\beta}((\alpha^4+\alpha\beta+1)^2 + (\alpha^4+\alpha\beta+1)((\alpha+1)^3+\beta)^2+\beta^2) \\
& = \alpha((\alpha+1)^3+\beta)^2((\alpha^2+1)\alpha+\beta).
\end{align*}
If $(\alpha^2+1)\alpha+\beta = 0$, then with $(\ref{condition})$ we have $\lambda = \mu\alpha$, hence $C_3 = 0$, a contradiction.
Consequently, $C_2^2 + C_1C_3 = C_2C_3+C_1C_4 = 0$ if and only if $\beta = (\alpha+1)^3$.

Observe that then $C_1=C_2=C_3=C_4$, and we have the $(m+1)$-dimensional solution space $\Lambda_F = \{(X,Y)\;:\;Y=X\;\mbox{or}\;Y=X+1\}$.

\section*{Acknowledgement}

N.A is supported by B.A.CF-19-01967;  T.K is  supported by TÜBİTAK project 215E200; and W.M. is supported by the FWF Project P 30966.

\end{document}